\documentclass{amsart}

\DeclareSymbolFont{bbold}{U}{bbold}{m}{n}
\DeclareSymbolFontAlphabet{\mathbbold}{bbold}

\usepackage{amsmath}%

\usepackage{amsfonts}%
\usepackage{amssymb}%
\usepackage{graphicx}
\usepackage{bm}

\usepackage{mathabx}
\usepackage{rotating}
\usepackage[all,cmtip]{xy}
\usepackage{cite}
\usepackage{hyperref}
\usepackage{geometry}
\usepackage{bbm}
\usepackage{stmaryrd}
\usepackage{tensor}
\usepackage{wasysym}

%

\makeatletter
\newcommand{\colim@}[2]{%
  \vtop{\m@th\ialign{##\cr
    \hfil$#1\operator@font colim$\hfil\cr
    \noalign{\nointerlineskip\kern1.5\ex@}#2\cr
    \noalign{\nointerlineskip\kern-\ex@}\cr}}%
}
\newcommand{\colim}{%
  \mathop{\mathpalette\colim@{\rightarrowfill@\scriptscriptstyle}}\nmlimits@
}
\renewcommand{\varprojlim}{%
  \mathop{\mathpalette\varlim@{\leftarrowfill@\scriptscriptstyle}}\nmlimits@
}
\renewcommand{\varinjlim}{%
  \mathop{\mathpalette\varlim@{\rightarrowfill@\scriptscriptstyle}}\nmlimits@
}
\makeatother

%

\newtheorem{dummy}{dummy}[section]              
\newtheorem{lemma}[dummy]{Lemma}
\newtheorem{theorem}[dummy]{Theorem}
\newtheorem{corollary}[dummy]{Corollary}
\newtheorem{proposition}[dummy]{Proposition}

\theoremstyle{definition}                                  
\newtheorem{definition}[dummy]{Definition}

\newtheorem{remark}[dummy]{Remark}


\newcommand{\Vect}{\mathbf{Vect}}

\DeclareMathOperator{\Hom}{Hom}

\DeclareMathOperator{\End}{End}

\newcommand{\module}{\mathrm{mod}}

\newcommand{\QC}{QC}

\DeclareMathOperator{\Rep}{Rep}

\newcommand{\Cat}{\mathbf{DGCat}}

\newcommand{\BAR}{\mathbf{Bar}}
\newcommand{\Tot}{\mathbf{Tot}}


\newcommand{\bs}{\backslash}

\newcommand{\ls}[2]{\tensor*[^{#1 }]{{#2}}{}}
\newcommand{\lrsub}[3]{\tensor*[_{#1}]{{#2}}{_{#3}}}




\DeclareMathOperator{\Ind}{Ind}

\newcommand{\HC}{HC}

\newcommand{\Spr}{Spr}




\newcommand{\fg}{\mathfrak{g}}


\newcommand{\cC}{\mathcal C}
\newcommand{\cD}{\mathcal D}

\newcommand{\cH}{\mathcal H}

\newcommand{\cN}{\mathcal N}

\newcommand{\cS}{\mathcal S}

\newcommand{\cU}{\mathcal U}



\newcommand{\wt}{\widetilde}


\makeatletter
\newcommand*\leftdash{\rotatebox[origin=c]{-45}{$\dabar@\dabar@\dabar@$}}
\newcommand*\rightdash{\rotatebox[origin=c]{45}{$\dabar@\dabar@\dabar@$}}
\makeatother

\newcommand{\quot}[3]{{#1}\backslash{#2}/{#3}} 
\newcommand{\wq}[2]{{#1}\leftdash{#2}}


\title{Highest Weights for Categorical Representations}

\author{David Ben-Zvi} \address{Department of Mathematics\\University
  of Texas\\Austin, TX 78712-0257} \email{benzvi@math.utexas.edu}
\author{Sam Gunningham} \address{Department of Mathematics\\University
  of Texas\\Austin, TX 78712-0257} \email{gunningham@math.utexas.edu}
\author{Hendrik Orem}

\begin{document}

\begin{abstract}
We present a criterion for establishing Morita equivalence of monoidal categories, and apply it to the categorical representation theory
of reductive groups $G$. We show that the ``de Rham group algebra" $\cD(G)$ (the monoidal category of $\cD$-modules on $G$) is Morita equivalent to the universal Hecke category $\cD(\quot{N}{G}{N})$ and to its monodromic variant $\wt{\cD}(\quot{B}{G}{B})$. In other words, de Rham $G$-categories, i.e., module categories for $\cD(G)$, satisfy a ``highest weight theorem" - they all appear in the decomposition of the universal principal series representation $\cD(G/N)$ or in twisted $\cD$-modules on the flag variety $\wt{\cD}(G/B)$. 
\end{abstract}
\maketitle

\section{Introduction}

\subsection{Morita equivalences}
The fundamental building block for representations of reductive groups is the principal series, the collection of representations constructed by parabolic induction from a maximal torus. These representations can be described as the constituents of the appropriate space of functions on the coset space $G/N$ (where $N$ is the unipotent radical of a Borel subgroup $B$), and are parametrized by modules for the Hecke algebra of double cosets $\quot{N}{G}{N}$. In the setting of finite dimensional representations of compact or complex reductive groups, the Borel-Weil theorem asserts that all representations are principal series so that we obtain a complete classification of representations based on their highest weights, as representations of the torus $H=B/N$, and their symmetries, identified with the Weyl group. The Casselman subrepresentation theorem provides an analogous picture for real reductive groups, showing that any irreducible admissible representation arises as a subrepresentation of a principal series representation. This assertion fails if we keep track of unitary structure, for example due to existence of discrete series. Likewise for representations of finite and p-adic reductive groups, analogous statements fail due to the existence of cuspidal representations. Our goal in this paper is to prove the corresponding result for the categorical representations of a complex reductive group $G$: they all appear in the principal series, so that there are no cuspidal categorical representations. 

Just as representations of a group $G$ arise naturally as spaces of functions on $G$-spaces, categorical representations of an algebraic group $G$ arise naturally as categories of sheaves on $G$-varieties $X$. Such categories are automatically module categories for the corresponding ``group algebra", the monoidal category of sheaves on $G$ equipped with the convolution product. (In order to perform ``functional analytic" operations on categories of sheaves it is essential to live in an appropriate setting of homotopical algebra, which for us will be the $\infty$-category of differential graded categories.) 

Two natural classes of sheaves to consider are quasi coherent sheaves and algebraic $\cD$-modules. Throughout, given a stack $X$, we denote by $QC(X)$ and $\cD(X)$ the $\infty$-categories of quasi coherent sheaves and $\cD$-modules on $X$ \cite{GR}. Thus we arrive at the notions of algebraic (or weak) and de Rham (or strong) categorical representations of $G$. These are module categories for the ``algebraic group algebra" $QC(G)$ or the ``de Rham group algebra" $\cD(G)$. The former (weak) notion is simpler -- in particular the definitive work of Gaitsgory \cite{1affine} shows algebraic $G$-categories are generated by the {\em trivial} module, $\Vect = \QC(pt)$. 
The latter (strong) notion, whose study was pioneered in \cite{BD Hecke}, is the most relevant to geometric representation theory. In particular the adjoint action of $G$ on $\fg$ endows the category $\cU(\fg)-\module$ with a de Rham $G$-action; it was shown by Beraldo (using Gaitsgory's 1-affineness result) that de Rham $G$-categories are generated by the category of $\cU(\fg)$-modules. Beilinson-Bernstein localization can be seen as a relation between this representation and the geometric representation given by twisted $\cD$-modules on the flag variety, or equivalently the ``monodromic principal series" category $\cD(G/N)_H$ of $H$-monodromic (i.e., weakly $H$-equivariant) $\cD$-modules on $G/N$. 

To explain the utility of having generators for the category of $G$-categories, recall the following fundamental result of Morita theory: if an abelian category $\cC$ has a compact projective generator $M$, then $\cC$ is equivalent to the category of modules for the corresponding Hecke algebra $\cH(M) := \End_\cC(M)$. The theorem of Gaitsgory asserts that the trivial algebraic $G$-category, $\Vect$ behaves as if it were a projective generator of the category of algebraic $G$-categories; in particular, the category of algebraic $G$-categories is equivalent to modules for the monoidal category $\Rep(G) = \End_{QC(G)}(\Vect)$. This situation is as nice as possible: the monoidal category $\Rep(G)$ is rigid, symmetric, and semisimple, making it amenable to study by algebraic and combinatorial methods. Similarly, Beraldo's result asserts that de Rham $G$-categories are equivalent to module categories for the monoidal category $\HC_G = \End_{\cD(G)}(\cU(\fg)-\module)$ of Harish-Chandra bimodules. The monoidal category $\HC_G$ is rigid (although it is not symmetric and it is far from being semisimple).

Our main result asserts that all de Rham $G$-categories are generated by either the principal series category $\cD(G/N)$ or its monodromic version $\cD(G/N)_H$.

\begin{theorem}
\label{thm:main}
Let $G$ be a complex reductive group, with Borel $B$, $N= [B,B]$, and $H=B/N$. The $\cD(G)-\cD(\quot NGN)$-bimodule $\cD(G/N)$ defines a Morita equivalence between the monoidal categories $\cD(G)$ and $\cD(\quot NGN)$.
\end{theorem}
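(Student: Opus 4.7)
The plan is to apply the general Morita criterion for monoidal categories developed earlier in this paper. That criterion reduces the theorem to two tasks: (i) computing the endomorphism monoidal category of $\cD(G/N)$ as a $\cD(G)$-module, and (ii) establishing a generation statement for $\cD(G/N)$ inside the $\infty$-category of de Rham $G$-categories.

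For (i), the convolution correspondence $(G/N) \times (\quot{N}{G}{N}) \to G/N$ induces a natural monoidal functor $\cD(\quot{N}{G}{N}) \to \End_{\cD(G)}(\cD(G/N))$. Using standard base change for $\cD$-modules on stacks, one identifies
\[
\End_{\cD(G)}(\cD(G/N)) \simeq \cD(G/N) \otimes_{\cD(G)} \cD(N \bs G) \simeq \cD(\quot{N}{G}{N}),
\]
with compatible monoidal structures. This step is essentially formal and follows from the descent properties of $\cD$-module categories.

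For (ii), I would invoke Beraldo's theorem (recalled in the introduction) that the category $\cU(\fg)-\module$ generates the $\infty$-category of de Rham $G$-categories. It therefore suffices to realize $\cU(\fg)-\module$ inside the $\cD(G)$-subcategory generated by $\cD(G/N)$. This is the content of monodromic Beilinson-Bernstein localization: the monodromic Hecke bimodule $\wt{\cD}(G/B) = \cD(G/N)_H$ of weakly $H$-equivariant $\cD$-modules on $G/N$ captures twisted $\cD$-modules on $G/B$ for all twists and, after localization, recovers $\cU(\fg)-\module$ assembled over all central characters. Since $\cD(G/N)_H$ is obtained from $\cD(G/N)$ by recording its canonical right $H$-action, this exhibits $\cU(\fg)-\module$ in the $\cD(G)$-subcategory generated by $\cD(G/N)$.

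The main obstacle is the generation step. Although monodromic localization is cleanest at regular parameters, one must handle all central characters uniformly -- including the singular locus of $\fh^*/W$ -- and then pass from the monodromic to the non-monodromic statement by integrating over the $H$-action. This is precisely why the universal/monodromic formulation of Beilinson-Bernstein is needed rather than a single twist, and it is the key technical heart of the theorem.
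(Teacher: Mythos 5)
Your overall strategy---apply the abstract Morita criterion (Theorem \ref{thm:morita}) with $A=\cD(G)$, $B=\cD(H)$, $M=\cD(N\bs G)$, and identify $\End_{\cD(G)}(\cD(G/N))$ with $\cD(\quot NGN)$ by base change---is exactly the paper's, and your step (i) is fine. But there are two gaps in what remains, one routine and one essential.

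The routine one: Theorem \ref{thm:morita} requires the bimodule to be \emph{properly} dualizable, i.e.\ the unit and counit of the duality must admit continuous right adjoints. You never address this. In the paper it is checked by writing $u^L=\delta_\ast\gamma^!$ and $c^L=\alpha_\ast\beta^!$ for the horocycle correspondences of Lemma \ref{lemma:dual} and observing that $\alpha,\delta$ are proper and $\beta,\gamma$ are smooth.

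The essential one: the remaining hypothesis of Theorem \ref{thm:morita} is that the action map $u^R:\cD(G)\to\End_{\cD(H)}(\cD(N\bs G))$ is \emph{conservative}, and your proposed route to it does not close. You want to deduce generation from Beraldo's theorem plus monodromic Beilinson--Bernstein localization over all central characters, and you yourself flag that the singular locus of $\fh^*/W$ is ``the key technical heart'' --- but that is precisely where localization fails to be an equivalence (global sections are not conservative at singular parameters), so the argument as proposed would stall exactly at the point you identify. The paper avoids localization entirely: following Mirkovi\'c--Vilonen, the composite $c^Lu^R$ is convolution with the Springer sheaf $\cS_G$, and the Springer decomposition exhibits the unit $\delta_e$ as a direct summand of $\cS_G$; hence $c^Lu^R$ contains the identity functor as a direct summand and $u^R$ is conservative for free, with no case analysis over central characters. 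This one-line Springer argument is the idea your proposal is missing, and it is what makes the theorem short rather than a reworking of Beilinson--Bernstein at singular weights.
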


\begin{corollary}
The following monoidal categories are all Morita equivalent.
\begin{enumerate}
\item $\cD(G)$,
\item $\cD(\quot{N}{G}{N})$,
\item $\cD_{H}(\quot{N}{G}{N})_H$,
\item $\HC_G$.
\end{enumerate}
\end{corollary}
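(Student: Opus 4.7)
The plan is to combine Theorem~\ref{thm:main} with two additional inputs -- Beraldo's generation theorem and a monodromic refinement of Theorem~\ref{thm:main} -- and then invoke transitivity of Morita equivalence. The equivalence (1)$\sim$(2) is exactly the content of Theorem~\ref{thm:main}; I describe the remaining two links below.

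For (1)$\sim$(4), I would apply the abstract Morita criterion that underlies Theorem~\ref{thm:main} to the $\cD(G)$-module category $\cU(\fg)-\module$. As recalled in the introduction, Beraldo's theorem asserts that $\cU(\fg)-\module$ generates the $\infty$-category of de Rham $G$-categories, and by definition $\HC_G = \End_{\cD(G)}(\cU(\fg)-\module)$. The Morita criterion therefore produces a Morita equivalence $\cD(G)\sim\HC_G$ with $\cU(\fg)-\module$ as the bimodule, essentially for free.

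For (1)$\sim$(3), I would repeat the proof of Theorem~\ref{thm:main} with the monodromic bimodule $\cD(G/N)_H$ (equivalently, the category $\wt{\cD}(G/B)$ of twisted $\cD$-modules on the flag variety mentioned in the abstract) in place of $\cD(G/N)$. The right $H$-action on $G/N$, coming from $H=B/N$ acting on the cosets, commutes both with the left $\cD(G)$-action and with convolution by sheaves on $\quot NGN$ on the right, so restricting to weakly $H$-equivariant objects on both factors yields a $\cD(G)$--$\cD_H(\quot NGN)_H$-bimodule. With these three Morita equivalences in hand, transitivity delivers the full chain (1)$\sim$(2)$\sim$(3)$\sim$(4).

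The main obstacle is the monodromic step: one must verify that both hypotheses of the Morita criterion -- generation of the target module $\infty$-category and correct identification of the endomorphism monoidal category -- survive passage to the $H$-monodromic subcategory. Generation of $\cD(G)-\mathrm{mod}$ by $\cD(G/N)_H$ should reduce to the non-monodromic statement using that the $H$-decoration is compatible with (commutes with) the $\cD(G)$-action, and the endomorphism identification amounts to a convolution computation for weakly $H$-equivariant sheaves on $N\backslash G/N$ that is parallel to the non-monodromic one carried out for Theorem~\ref{thm:main}. Once these compatibilities are checked, the corollary follows from the transitivity argument above.
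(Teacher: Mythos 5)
Your overall strategy---chain together three Morita equivalences and use transitivity---matches the paper, and the links (1)$\sim$(2) and (1)$\sim$(4) are handled essentially as the paper does: the former is Theorem \ref{thm:main}, and the latter is just Beraldo's theorem (quoted in the paper as Theorem \ref{theorem:D(G)andHC}). On that second link your framing is slightly off: if you already grant Beraldo's result that $\cU(\fg)-\module$ generates with endomorphisms $\HC_G$, you have the Morita equivalence outright and the criterion of Theorem \ref{thm:morita} adds nothing; if instead you wanted to \emph{derive} it from the criterion, you would have to exhibit $\cU(\fg)-\module$ as a properly dualizable bimodule and check conservativity of the action map, which is certainly not ``for free.'' Either cite the theorem or do the work, but the middle position is not a proof.

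The real divergence, and the genuine gap, is the link involving $\cD_H(\quot NGN)_H$. You propose to rerun the entire proof of Theorem \ref{thm:main} with the monodromic bimodule $\cD(G/N)_H$, and you correctly identify that this requires re-establishing proper dualizability, the identification of the endomorphism category, and (most seriously) conservativity of the action map in the monodromic setting---presumably via a monodromic analogue of the Springer sheaf argument. None of this is carried out, and it is not clear over which monoidal category $B$ the monodromic bimodule should be properly dualizable (the role played by $\cD(H)$ in the non-monodromic case). The paper avoids all of this: it proves (2)$\sim$(3) directly by observing that the bimodule $\cD(\quot NGN)_H$ implements the functor of \emph{weak} $H$-invariants on $\cD(\quot NGN)$-modules, and that this functor is a Morita equivalence by Gaitsgory's 1-affineness theorem (weak $H$-categories are generated by the trivial module, so passing to weak $H$-invariants loses nothing). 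That two-line argument replaces your entire monodromic reconstruction; without it, your proposal leaves its hardest step as an unverified list of ``compatibilities to be checked.''
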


%

To prove Theorem \ref{thm:main}, we will formulate a more general statement giving sufficient conditions for when a module category $M$ for a monoidal category $A$ defines a Morita equivalence between $A$ and $\cH(M):=\End_A(M)$.

For the purposes of analogy, let us consider the following situation. Let $A$ be a finite dimensional algebra over a field $k$, and $M$ a right $A$-module.  Note that $M$ is naturally a $\cH(M)-A$-bimodule, so determines a functor
\[
M \otimes_A (-): A-\module \to \cH(M)-\module
\]
Now suppose that $M$ is projective and faithful as an $A$-module. In that case the module $M$ defines a Morita equivalence between $A$ and $\cH(M)$, which is to say, the functor $M \otimes_A (-)$ defines an equivalence between $A-\module$ and $\cH(M)-\module$.

Now let $A$ and $B$ be monoidal $\infty$-categories, and $M = {}_B M_A$ a bimodule category. As above, we think of $M$ as defining a functor:
\[ 
M\otimes_A (-): A-\module \to B-\module.
\]
We say that $M$ has a left (respectively right) dual if the corresponding functor has a left (respectively right) adjoint. A bimodule $M$ is \emph{proper dualizable} if $M$ is dualizable as an $A$-module, and the evaluation and coevaluation maps admit continuous right adjoints (see Definition \ref{def:properlyDualizable} for the full definition).

Our key technical result is the following 
\begin{theorem}
\label{thm:morita}
Suppose $M = \lrsub BMA$ is a proper dualizable bimodule. Then there is a fully faithful embedding
\[
i:\cH(M)-\module \hookrightarrow A-\module.
\]
If, in addition, the action map $A \to \End_B(M)$ is conservative then $i$ is an equivalence of categories.
\end{theorem}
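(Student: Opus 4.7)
The plan is to adapt classical Morita theory to the $\infty$-categorical monoidal setting: a bimodule $M$ always induces an adjunction between module categories, and one must verify full faithfulness via a ``projectivity''-type hypothesis (here proper dualizability) and essential surjectivity via a ``faithfulness''-type hypothesis (here conservativity of the action $A \to \End_B(M)$).

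First, I would view $M$ as an $(\cH(M), A)$-bimodule via the tautological left action of $\cH(M) = \End_A(M)$, and produce an $(A, \cH(M))$-bimodule $M^\vee$---the $A$-dual of $M$ guaranteed by dualizability. These define the functors
\begin{align*}
i &:= M^\vee \otimes_{\cH(M)} (-) : \cH(M)-\module \longrightarrow A-\module, \\
G &:= M \otimes_A (-) : A-\module \longrightarrow \cH(M)-\module,
\end{align*}
and the properness assumption---continuous right adjoints to evaluation and coevaluation---ensures both are continuous and that the duality data supplies the unit and counit of an adjunction $i \dashv G$.

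Second, full faithfulness of $i$ amounts to the unit $\id \Rightarrow G \circ i$ being an equivalence. Evaluating at the free module $\cH(M) \in \cH(M)-\module$, the content is the canonical identification
\[
M \otimes_A M^\vee \;\simeq\; \cH(M) \quad \text{as } \cH(M)\text{-bimodules,}
\]
which follows because both sides represent the internal $\Hom$-object $[M,M]_A$---the left side by $A$-dualizability, the right side by the very definition $\cH(M) = \End_A(M)$. Continuity of $G \circ i$ together with the fact that every $\cH(M)$-module is a colimit of free modules then promotes the equivalence from this generator to all of $\cH(M)-\module$.

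Third, for the equivalence statement, I would use the standard fact that a fully faithful left adjoint is an equivalence if and only if its right adjoint is conservative. The task thus reduces to deriving conservativity of $G$ from conservativity of $A \to \End_B(M)$. The key link is that proper dualizability identifies $\End_B(M)$ with a bar-type construction involving $M$ and $M^\vee$ that directly controls the behavior of $G$ on the generator $A \in A-\module$; conservativity of this action at the generator, combined with continuity, propagates to conservativity of $G$ on all of $A-\module$.

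I expect the main obstacle to be this final translation. The algebraic statement ``the action $A \to \End_B(M)$ is conservative'' and the categorical statement ``the functor $G = M \otimes_A (-)$ reflects isomorphisms'' sit a priori at different levels, and bridging them requires careful disentangling of the $A$-, $B$-, and $\cH(M)$-actions on $M$ and a judicious use of proper dualizability to access $\End_B(M)$ from the adjunction $i \dashv G$. The first two steps, while $\infty$-categorically delicate, follow largely formal patterns from higher Morita theory.
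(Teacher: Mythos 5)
Your first two steps are sound, and they follow the standard higher Morita-theory route to the embedding: the identification $M \otimes_A M^\vee \simeq \cH(M)$ is formal from $A$-dualizability, the unit of $i \dashv G$ is this canonical equivalence on the free module, and since $G \circ i$ and $\id$ are both continuous the equivalence propagates to all of $\cH(M)-\module$. (For this half the properness hypothesis does little work beyond supplying the duality data; the paper instead obtains full faithfulness as a byproduct of the monadic descent argument below, which is where it actually spends properness.)

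The gap is in your third step, and it is a real one. Conservativity does \emph{not} propagate from a generator by continuity: the full subcategory of objects on which a functor reflects equivalences is not closed under colimits, so controlling $G = M \otimes_A (-)$ on the free module $A$ says nothing about whether $G$ kills a module built as a colimit of copies of $A$. More to the point, there is no formal implication from ``$u^R\colon A \to \End_B(M)$ is conservative'' to ``$G$ is conservative''; this is a descent statement. The paper's actual mechanism, absent from your sketch, is: (1) the composite $i \circ G$ is tensoring with $\ls\vee M \otimes_{\cH(M)} M$, computed as the geometric realization of the bar complex $C_\bullet$ with $C_n \simeq (\ls\vee M \otimes_B M)^{\otimes_A (n+1)}$; (2) proper dualizability guarantees that the face and degeneracy maps of $C_\bullet$ (built from $c^L$ and $u^L$) admit continuous right adjoints, so by Lemma \ref{lemma:limcolim} this colimit agrees with the totalization of the cosimplicial diagram $C^\bullet$ of right adjoints; (3) the coaugmented diagram $\widetilde{C}^\bullet$ satisfies the monadic Beck--Chevalley conditions, so Lurie's monadic descent theorem identifies $\Tot(C^\bullet)$ with $A$ exactly when the coaugmentation $u^R\colon A \to \End_B(M) = C^0$ is conservative, whence $\ls\vee M \otimes_{\cH(M)} M \simeq A$ as $A$-bimodules and $i$ is an equivalence. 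This chain --- colimit-to-limit swap, Beck--Chevalley verification, monadic descent --- is the entire content of the second half of the theorem and is precisely where the hypothesis is stated about $\End_B(M)$ rather than about $G$; without it your argument does not close.
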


We will apply this result in the case $A=\cD(G)$, $B=\cD(H)$, and $M=\cD(N\bs G)$ to obtain the equivalence between $\cD(G)-\module$ and $\cD(\quot NGN)-\module$ in Theorem \ref{thm:main}.

\subsection{Acknowledgments}
We would like to acknowledge the National Science Foundation for its support through individual grant DMS-1103525 (DBZ).

\section{Abstract Morita Theory}

\subsection{$\infty$-categorical preliminaries}
Throughout, we rely on the foundations developed in \cite{l-ha}. Let $k$ be a field, and let $\Cat$ denote the $(\infty,1)$ category of $k$-linear, stable, presentable $\infty$-categories where morphisms are functors which are left adjoints (equivalently, functors which preserve small colimits). Recall that $\Cat$ comes equipped with a symmetric monoidal structure $\otimes$. The unit object of $\Cat$ is the category $S=\Vect_k$ of (differential graded) $k$-vector spaces.

Recall that a pair of adjoint functors:
\[
G: C \leftrightarrows D: F
\]
(where $G$ is left adjoint to $F$) give rise to a monad $FG$ acting on $C$. We will denote by $C^{FG}$ the category of modules for $FG$ in $C$.

\begin{theorem}[Barr-Beck-Lurie, easy version \cite{l-ha}, Corollary 4.7.4.16]
Let $F:D\to C$ be a morphism in $\Cat$ (in particular, $F$ preserves colimits) which admits a left adjoint $G$. Then $F$ factors canonically as 
\[
\xymatrix{
D \ar[rd]_{\overline{F}} \ar[rr]^F &&  C\\
& C^{FG} \ar[ru] &
}
\]
where $\overline{F}$ has a fully faithful left adjoint. If, in addition, $F$ is conservative then $\overline{F}$ is an equivalence (in that case, we say that $F$ is monadic).
\end{theorem}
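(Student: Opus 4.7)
The plan is to construct the comparison functor $\overline{F}$ explicitly, build a candidate left adjoint using the bar construction, and check fully faithfulness and conservativity by a routine triangle identity argument.

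First, I would unwind the monad. Let $\eta: \id_C \to FG$ be the unit and $\epsilon: GF \to \id_D$ the counit of the adjunction $G \dashv F$. Then $T := FG$ becomes a monad on $C$ with unit $\eta$ and multiplication $F\epsilon G: TT = FGFG \to FG = T$. Given $d \in D$, the object $F(d)$ carries a canonical $T$-action $F(\epsilon_d): FGF(d) \to F(d)$, which is associative and unital by the usual triangle identities. This assignment defines the comparison functor $\overline{F}: D \to C^{T}$, and by construction the forgetful functor $U: C^T \to C$ satisfies $U \circ \overline{F} = F$.

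Next I would produce the left adjoint $\overline{G}: C^T \to D$. By the adjoint functor theorem in $\Cat$ it is enough to note that $\overline{F}$ preserves all small colimits: $U$ both preserves and creates colimits in the presentable setting (since free $T$-modules generate $C^T$ under colimits), and $F$ preserves colimits by assumption. Concretely, for a $T$-module $M$ one has the canonical bar resolution $T^{\bullet+1}M \to M$, whose geometric realization recovers $M$; the free modules $T^{n+1}M$ should land under $\overline{G}$ at $G T^n M$, and for a general $M$ one sets
\[
\overline{G}(M) \;=\; \bigl| G T^{\bullet} M \bigr|.
\]
To see $\overline{G}$ is fully faithful, it is equivalent to show the unit $\mathrm{id}_{C^T} \to \overline{F}\,\overline{G}$ is an equivalence. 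On a free module $T(c)$ this is immediate from the triangle identities. For general $M$, applying $\overline{F}$ commutes with the realization (as $F$ preserves colimits and the $T$-action on $\overline{G}(M)$ is compatible), and
\[
\overline{F}\,\overline{G}(M) \;\simeq\; \bigl|FG T^{\bullet} M\bigr| \;=\; \bigl|T^{\bullet+1}M\bigr| \;\simeq\; M,
\]
the last equivalence being the bar resolution. This is the step I expect to be the main obstacle: making the equivalence $\bigl|T^{\bullet+1}M\bigr|\simeq M$ rigorous in the $\infty$-categorical setting requires knowing that the simplicial bar object is split augmented after forgetting to $C$, and that $F$-split simplicial diagrams are preserved, which is precisely where Lurie's $\infty$-categorical refinement of Barr--Beck does the real work.

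Finally, suppose $F$ is conservative. Since $U \overline{F} = F$ and $U$ itself trivially preserves equivalences, any morphism $f$ in $D$ with $\overline{F}(f)$ invertible has $F(f)$ invertible, hence $f$ is invertible; so $\overline{F}$ is conservative. A conservative right adjoint whose left adjoint is fully faithful is automatically an equivalence: for any $x$ the triangle identity gives $\overline{F}(\epsilon_x) \circ \eta_{\overline{F}(x)} = \mathrm{id}_{\overline{F}(x)}$, and since $\eta$ is an equivalence, $\overline{F}(\epsilon_x)$ is an equivalence, whence $\epsilon_x$ is an equivalence by conservativity. This gives the second assertion.
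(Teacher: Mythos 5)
The paper offers no proof of this statement: it is quoted directly from Lurie (\emph{Higher Algebra}, Corollary 4.7.4.16) and used as a black box, so there is no in-paper argument to compare against. Your sketch is essentially the standard proof of the Barr--Beck--Lurie theorem and is sound in outline: the construction of the comparison functor $\overline{F}$, the bar-construction formula $\overline{G}(M)=|GT^{\bullet}M|$ for its left adjoint, the reduction of fully faithfulness to effectivity of the bar resolution, and the closing argument that a conservative right adjoint with fully faithful left adjoint is an equivalence are all correct, and you rightly isolate the genuinely hard $\infty$-categorical input (that the $U$-split simplicial object $T^{\bullet+1}M$ is effective in $C^{T}$) as the step being black-boxed. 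One slip worth correcting: to produce a \emph{left} adjoint of $\overline{F}$ via the adjoint functor theorem you would need $\overline{F}$ to preserve limits and be accessible; preservation of colimits yields a \emph{right} adjoint. This is harmless here because your explicit formula, together with
\[
\Hom_{D}\bigl(|GT^{\bullet}M|,d\bigr)\simeq \varprojlim \Hom_{C^{T}}\bigl(T^{\bullet+1}M,\overline{F}d\bigr)\simeq \Hom_{C^{T}}\bigl(M,\overline{F}d\bigr),
\]
constructs the left adjoint directly. Similarly, $U$ creates colimits because $T=FG$ preserves them (both $F$ and $G$ are continuous), not because free modules generate $C^{T}$.
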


The following lemma states that certain colimits in $\Cat$ can be computed as limits of the corresponding diagram of right adjoints. The result appears as Lemma 1.3.3 in \cite{DGcat}; it is a consequence of Corollary 5.3.3.4 in \cite{topos}.
\begin{lemma}\label{lemma:limcolim}
Let $I$ be an $\infty$-category, and $I \to \Cat$, $i \mapsto C_i$ a functor. For each $\alpha: i \to j$, let $F_\alpha: C_i \to C_j$ denote the corresponding functor. Suppose each $F_\alpha$ admits a continuous right adjoint, and consider the corresponding diagram $I^{op} \to \Cat$ formed by taking the right adjoints of each $F_\alpha$. Then there is a canonical equivalence
\[
\xymatrix{
\colim_{i\in I}(C_i) \ar[r]^{\sim} & \varprojlim_{j\in I^{op}}(C_j).
}
\]
\end{lemma}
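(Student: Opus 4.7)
The plan is to invoke the fundamental duality between presentable $\infty$-categories with left-adjoint morphisms and with right-adjoint morphisms. Write $\Cat^R$ for the version of $\Cat$ whose morphisms are continuous right-adjoint functors of presentable stable $k$-linear categories (so that by convention $\Cat = \Cat^L$). Passing a functor to its right adjoint provides an equivalence $\Cat \simeq (\Cat^R)^{op}$; this is the content of the cited Corollary 5.3.3.4 of \cite{topos}, applied in the stable $k$-linear setting.

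Under this duality, the colimit problem in $\Cat$ turns into a limit problem in $\Cat^R$: the colimit $\colim_{i\in I}(C_i)$ in $\Cat$ along the diagram of $F_\alpha$'s is tautologically identified with $\varprojlim_{j\in I^{op}}(C_j)$ computed in $\Cat^R$ along the right adjoints $G_\alpha$. The second step is to recognize this $\Cat^R$-limit as a limit in $\Cat$ itself. The hypothesis that each $G_\alpha$ is continuous, together with the adjoint functor theorem for presentable categories, guarantees that $G_\alpha$ itself admits a right adjoint, hence defines a morphism in $\Cat = \Cat^L$. Consequently the diagram $I^{op} \to \Cat^R$ lifts canonically to a diagram $I^{op} \to \Cat$ with the same underlying diagram of $\infty$-categories and functors.

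The main technical obstacle is then to verify that the limit of this lifted diagram computed in $\Cat^R$ agrees with the limit computed in $\Cat = \Cat^L$. The relevant structural fact (again from \cite{topos}, \S5.5.3) is that both $\Cat^L$ and $\Cat^R$ admit all small limits, and in both cases such limits are computed by forgetting to the ambient $(\infty,1)$-category of $\infty$-categories and all functors: the universal cone is detected on underlying categories, and the class of morphisms (left or right adjoints) is stable under forming such limits. Since the two diagrams $I^{op}\to \Cat^L$ and $I^{op}\to \Cat^R$ have the same image after this forgetful step, their limits coincide, which yields the chain
\[
\colim_{i\in I}(C_i) \;\simeq\; \varprojlim_{j\in I^{op}}^{\Cat^R}(C_j) \;\simeq\; \varprojlim_{j\in I^{op}}^{\Cat}(C_j)
\]
as desired. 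The only subtlety worth spelling out carefully is the naturality of the equivalence — i.e., that the structure maps from the colimit into each $C_i$ (which are morphisms in $\Cat^L$) correspond under the duality to the projection maps from the limit to each $C_j$ (which are morphisms in $\Cat^R$); this is precisely what the HTT duality isomorphism encodes.
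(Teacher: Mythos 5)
Your argument is correct and is exactly the standard proof that the paper's cited references (Gaitsgory's Lemma 1.3.3, resting on HTT \S 5.5.3) supply: pass through the antiequivalence between the left-adjoint and right-adjoint versions of the category of presentable categories, then use that limits in both versions are computed on underlying $\infty$-categories so that the $\Cat^R$-limit of the lifted diagram agrees with the $\Cat$-limit. The paper itself offers no proof beyond the citation, so there is nothing further to reconcile.
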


The theory of monadic descent in the context of $\infty$-categories was developed by Lurie in \cite{l-ha}, Chapter 4.7.6. The following definition and theorem are what we require in this paper.
\begin{definition}
Let $C^\bullet$ be a cosimplicial object (respectively, augmented cosimplicial object) of $\Cat$; we say that $C^\bullet$ satisfies the monadic Beck-Chevalley conditions, if the following two conditions hold:
\begin{enumerate}
\item For every object $[i]$ in the simplex category $\Delta$ (respectively the augmented simplex category  $\Delta^+$), the last face map $C^{\partial^0_i}:C^{i} \to C^{i+1}$ admits a left adjoint $(C^{\partial^0_i})^L$;
\item For every morphism $\alpha: [i] \to [j]$ in $\Delta$ (respectively $\Delta^+$), the diagram
\[
\xymatrix{
C^{i+1} \ar[r]^{(C^{\partial^0_i})^L}  & C^i\\
C^{j+1} \ar[r]_{(C^{\partial^0_i})^L} \ar[u]^{C^\alpha}& C^j \ar[u]_{C^\alpha},
}
\]
commutes.
\end{enumerate}
\end{definition}

\begin{theorem}[Monadic descent, \cite{l-ha}, Theorem 4.7.6.2, Corollary 4.7.6.3]\label{thm:md}
\begin{enumerate}
\item Let $\widetilde{C}^\bullet$ be an coaugmented cosimplicial object of $\Cat$ which satisfies the monadic Beck-Chevalley conditions. Let $C^\bullet$ denote the cosimplical object without the coaugmentation. Then the canonical map $\overline{F}: \widetilde{C}^{-1} \to \Tot(C^\bullet)$ admits a fully faithful right adjoint. If the coaugmentation $\widetilde{C}^{-1} \to C^0$ is conservative, then $\overline{F}$ is an equivalence.
\item Let $C^\bullet$ be a cosimplicial object of $\Cat$. Then the coaugmented diagram $\Tot(C^\bullet) \to C^\bullet$ satisfies the monadic Beck-Chevalley conditions.
\end{enumerate}
\end{theorem}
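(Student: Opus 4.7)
The theorem follows by reducing to the easy Barr-Beck-Lurie theorem via Lemma~\ref{lemma:limcolim}. The core of Part (1) is to identify $\Tot(C^\bullet)$ with the Eilenberg-Moore $\infty$-category $(C^0)^T$ for a suitable monad $T$ on $C^0$, in such a way that $\overline{F}$ becomes the canonical comparison functor. First, I would use the Beck-Chevalley condition at the coaugmentation level to extract the adjunction $G \dashv F$, where $F := C^{\partial^0_{-1}} : \widetilde{C}^{-1} \to C^0$ is the coaugmentation and $G := (C^{\partial^0_{-1}})^L$ is its left adjoint. This determines a monad $T := FG$ on $C^0$ and, by Barr-Beck-Lurie, a canonical comparison functor $K : \widetilde{C}^{-1} \to (C^0)^T$ with the stated fully faithful adjoint, which is an equivalence precisely when $F$ is conservative.

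The main work is then to establish a canonical equivalence $\Tot(C^\bullet) \simeq (C^0)^T$ under which $\overline{F}$ is identified with $K$. Using the left adjoints supplied at each level by the Beck-Chevalley conditions, Lemma~\ref{lemma:limcolim} converts the cosimplicial limit $\varprojlim C^\bullet$ into the geometric realization of the simplicial object obtained by replacing each $d^0$-face map $C^{\partial^0_i}$ by its left adjoint. The Beck-Chevalley squares at all levels ensure this simplicial object is coherent and moreover coincides with the bar resolution of the monad $T$ acting on $C^0$; its colimit in $\Cat$ is $(C^0)^T$, the $\infty$-categorical refinement of the classical presentation of Eilenberg-Moore categories as sifted colimits of free modules. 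Under this identification $\overline{F}$ is tautologically the comparison $K$, so Part (1) reduces to Barr-Beck-Lurie.

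For Part (2), given a cosimplicial $C^\bullet$ in $\Cat$ and setting $\widetilde{C}^{-1} := \Tot(C^\bullet)$, each projection from the limit preserves limits (as any projection from a limit does), and so admits a left adjoint by the adjoint functor theorem in presentable $\infty$-categories. The Beck-Chevalley commutativity is then a direct consequence of the universal property of $\Tot$ as a levelwise limit in $\Cat$: the left adjoints are constructed from this universal property in a canonically compatible way.

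The principal obstacle is the identification in Part (1) of the simplicial object produced by Lemma~\ref{lemma:limcolim} with the bar resolution of $T$. This demands detailed matching of the Beck-Chevalley squares, at all levels of the cosimplicial diagram, with the face and degeneracy data of the bar complex --- a delicate piece of $\infty$-categorical simplicial bookkeeping that the Beck-Chevalley conditions are designed to accommodate. Once this identification is in hand, the remaining steps are formal applications of Barr-Beck-Lurie together with the universal properties of $\Tot$.
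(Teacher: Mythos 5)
First, on the comparison itself: the paper gives no proof of this statement --- it is quoted directly from Lurie's \emph{Higher Algebra} (Theorem 4.7.6.2 and Corollary 4.7.6.3) --- so there is no in-paper argument to measure yours against. Your high-level skeleton is the right one and matches Lurie's: identify $\Tot(C^\bullet)$ with the Eilenberg--Moore category $(C^0)^T$ for the monad $T=FG$ of the coaugmentation, and then invoke the easy Barr--Beck--Lurie theorem. (A small side point: Barr--Beck--Lurie as stated in the paper produces a fully faithful \emph{left} adjoint, which is also what Proposition \ref{prop:descent} uses; the word ``right'' in the statement of Theorem \ref{thm:md} appears to be a slip, and your write-up inherits the ambiguity without resolving it.)

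The mechanism you propose for the key identification $\Tot(C^\bullet)\simeq (C^0)^T$ does not work as written, however. Lemma \ref{lemma:limcolim} exchanges a limit for a colimit only when \emph{every} structure map of the diagram admits the requisite adjoint; the monadic Beck--Chevalley conditions supply left adjoints only for the maps $C^{\partial^0_i}$, which do not generate $\Delta$, so ``the simplicial object obtained by replacing each $d^0$-face map by its left adjoint'' is not a diagram over $\Delta^{op}$ and the lemma does not apply in the stated generality. Even in situations where all structure maps are adjointable (as in the paper's application, where the bar complex is built from $c^L$ and $u^L$), your argument then rests on the claim that $(C^0)^T$ is the geometric realization in $\Cat$ of the bar resolution of $T$ --- but that claim is essentially the monadic descent statement being proved (it asserts that the free--forgetful adjunction for $T$ itself satisfies descent), so the argument is circular at precisely the point you dismiss as ``bookkeeping.'' The actual proof instead establishes directly that evaluation $\Tot(C^\bullet)\to C^0$ admits a left adjoint built levelwise from the $(C^{\partial^0_i})^L$ using the Beck--Chevalley squares, is conservative, and is monadic with monad equivalent to $FG$; that is where the content lies. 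Your Part (2) has an analogous gap: the adjoint functor theorem does produce a left adjoint to the projection $\Tot(C^\bullet)\to C^0$ in the presentable setting, but it gives no control over the commutativity of the Beck--Chevalley squares, which is the substance of the claim; moreover both the statement and your argument tacitly require $C^\bullet$ itself to satisfy the Beck--Chevalley conditions, since those for the coaugmented diagram include the existence of left adjoints for all $C^{\partial^0_i}$ with $i\geq 0$.
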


\subsection{Monoidal categories, modules, and dualizibility}
In this paper, a \emph{monoidal category} means an algebra object in $\Cat$, in the sense of \cite{l-ha}, Chapter 4. Given a monoidal category $A$, we have the notion of a (left or right) \emph{module category} $M$ for $A$ (\cite{l-ha}, Chapter 4.2, 4.3). In particular this means that $M$ and the structure maps $A\otimes M \to M$ are in $\Cat$. We write $A-\module$ for the category of left $A$-module objects in $\Cat$, and we identify the category of right $A$-modules with $A^{op}-\module$ (note that $A^{op}$ here refers to $A$ with the opposite monoidal structure, not the opposite category). Given monoidal categories $A$ and $B$, a $B-A$-bimodule is the same thing as an object of $B \otimes A^{op}-\module$.

Given an $A-A$-bimodule $M$, the Hochshild homology category $HH(A;M)$ can be identified with the colimit of the simplicial bar complex $\BAR_\bullet(A; M)$, whose $n$-simplices are given by $M \otimes A^{\otimes n}$. In the case where the bimodule $M$ is of the form $L \otimes N$, where $L$ is a left $A$-module, and $N$ is a right $A$-module, the Hochshild homology $HH(A; L\otimes N)$ can be identified with the relative tensor product $L \otimes_A N$. In that case we will often write $\BAR_\bullet(A;L,N)$ for the corresponding bar complex. 

Suppose $A$ and $B$ are monoidal categories, and $M$ a $B-A$-bimodule. Thus $M$ determines a functor 
\[
M \otimes_A (-): A-\module \to B-\module.
\]
\begin{definition}
We say that $M = \lrsub BMA$ is \emph{$A$-dualizable} (or \emph{left dualizable}) if $M \otimes_A (-)$ has a left adjoint. Similarly, $M$ is $B$-dualizable (or \emph{right dualizable}) if $M \otimes_A (-)$ has a right adjoint. 
\end{definition}

More explicitly, the bimodule $M$ is $A$-dualizable if and only if there exists an $A-B$-bimodule $\ls \vee M$, together with morphisms
\begin{align}
u^L:B \to M \otimes_A \ls \vee M, \\
c^L : \ls \vee M \otimes_B M \to A,
\end{align}
such that the \emph{triangle identities} hold, that is, the composite functors
\begin{align}\label{eq:zorro}
M \xrightarrow{u_L \otimes 1_M} M \otimes_A \ls \vee M \otimes_B M \xrightarrow{1_M \otimes c_L} M \\
\ls \vee M \xrightarrow{1_{\ls \vee M} \otimes u^L} \ls \vee M \otimes_B M \otimes _A \ls \vee M \xrightarrow{c^L \otimes 1_{\ls \vee M}} \ls \vee M
\end{align}
are equivalences.

Similarly, $M$ is $B$-dualizable if there exists an $A-B$-bimodule $M^\vee$, together with morphisms:
\begin{align}
c^R: M \otimes_A M^\vee \to B, \\
u^R: A \to M^\vee \otimes_B M,
\end{align}
such that the corresponding triangle identities hold.

\begin{remark}
\begin{enumerate}
\item Note that the condition that $M$ be $A$ dualizable (repectively $B$) depends only on the structure of $M$ as an $A$-module (respectively, a $B$-module). To see this, note that the data $(u^L, c^L)$ is equivalent to data:
\begin{align}
u_0^L: S \to M \otimes_A \ls \vee M, \\
c^L : \ls \vee M \otimes_S M \to A,
\end{align}
satisfying the triangle condition (recall that $S=\Vect_k$ denotes the unit of $\Cat$). In other words, the bimodule ${}_B M_A$ being left dualizable is equivalent to ${}_S M_A$ being left dualizable.
\item If $M$ is left dualizable, then for any right $A$-module $N$, we have
\[
\Hom_{A^{op}}(M,N) \simeq N \otimes_A \ls\vee M.
\]
(Here $A^{op}$ refers to $A$ with the opposite \emph{monoidal} structure).
\item If $M$ is left dualizable, we have an identification
\[
\ls \vee M \otimes _A M \simeq \Hom_A(M, M).
\]
The unit morphism $u^L$ becomes identified with the inclusion of the unit object in the monoidal category $S \to \cH(M) = \Hom_A(M,M)$.
\item A sufficient condition for $M$ to be dualizable as an $S$-module is that it is compactly generated. In that case $M^\vee = \Ind(M_{c}^\diamondsuit)$, where $M^\diamondsuit_{c}$ means the opposite category of the subcategory of compact objects.
\end{enumerate}
\end{remark}

\begin{definition}
\label{def:properlyDualizable}
We say that the $B-A$-bimodule $M = {}_B M_A$ is \emph{properly dualizable} if $M$ is $A$-dualizable and the evaluation and coevaluation morphisms have continuous right adjoints.
\end{definition}

\begin{remark}
Suppose $F:A\to B$ is a morphism in $\Cat$ where $A,B$ are compactly generated. Then $F$ has a continuous right adjoint if and only if $F$ sends compact objects to compact objects.
\end{remark}

\begin{lemma}\label{lemmaleftright}
Suppose $M= {}_B M_A$ is a properly dualizable bimodule. Then $M$ is $B$-dualizable. Moreover, the $B$-dual $M^\vee$ is equivalent to the $A$-dual $\ls \vee M$, and the unit (respectively counit) morphism for the right duality is given by the right adjoint of the counit (respectively unit) for the left duality.
\end{lemma}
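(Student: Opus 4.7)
The plan is to take $M^\vee := \ls\vee M$ and define the candidate right-duality structure maps as the right adjoints of the given left-duality data:
\[
u^R := (c^L)^R \colon A \to \ls\vee M \otimes_B M, \qquad c^R := (u^L)^R \colon M \otimes_A \ls\vee M \to B.
\]
By the proper-dualizability hypothesis, both right adjoints exist as morphisms in $\Cat$, so these are well-defined candidates. The entire content of the lemma is then that $(u^R, c^R)$ satisfies the two triangle identities of a right duality and thereby witnesses $B$-dualizability of $M$, with $M^\vee \simeq \ls\vee M$ as $A$-$B$-bimodules.

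To verify the first triangle identity, I would take right adjoints of the corresponding identity for the left duality. The left-duality triangle identity for $M$ asserts that
\[
(1_M \otimes c^L) \circ (u^L \otimes 1_M) \simeq 1_M
\]
as endofunctors of $M$. Passing to right adjoints reverses the order of composition and applies $(-)^R$ to each factor, yielding
\[
(u^L \otimes 1_M)^R \circ (1_M \otimes c^L)^R \simeq 1_M.
\]
Using the identifications $(1_M \otimes c^L)^R \simeq 1_M \otimes (c^L)^R = 1_M \otimes u^R$ and $(u^L \otimes 1_M)^R \simeq (u^L)^R \otimes 1_M = c^R \otimes 1_M$ --- that is, that tensoring with an identity morphism preserves adjunctions --- this becomes
\[
(c^R \otimes 1_M) \circ (1_M \otimes u^R) \simeq 1_M,
\]
which is precisely one of the triangle identities required for right duality. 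The companion triangle identity, an equivalence of endofunctors of $\ls\vee M$, is obtained in exactly the same manner from the second triangle identity of the left duality.

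The main obstacle is rigorously justifying the identifications $(f \otimes_A 1_N)^R \simeq f^R \otimes_A 1_N$ used above. This is the assertion that the relative tensor product is a 2-functor between the appropriate $(\infty,2)$-categories of module categories, i.e., that it preserves adjunctions. For presentable stable $k$-linear $\infty$-categories this follows from the realization of $\otimes_A$ as the geometric realization of the bar complex together with the 2-functoriality of $\otimes$ and of geometric realizations in $\Cat$; once this is invoked, the remainder of the argument is a formal 2-categorical diagram chase, and the stated compatibility of bimodule structures on $M^\vee \simeq \ls\vee M$ follows automatically from the uniqueness of right adjoints.
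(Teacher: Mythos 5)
Your proof is correct and follows the same route as the paper: take $M^\vee = \ls\vee M$, define $u^R = (c^L)^R$ and $c^R = (u^L)^R$ (continuous by proper dualizability), and obtain the right-duality triangle identities by passing to right adjoints in the left-duality ones. You additionally spell out the point the paper leaves implicit --- that $(f \otimes_A 1_N)^R \simeq f^R \otimes_A 1_N$, i.e., that the relative tensor product is compatible with adjunctions --- which is exactly the right thing to justify and is handled correctly.
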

\begin{proof}
As $M$ is left dualizable, we have functors:
\begin{align}
u^L:B \to M \otimes_A \ls \vee M, \\
c^L : \ls \vee M \otimes_B M \to A,
\end{align}
such that the triangle identities of Equation~\ref{eq:zorro} are satisfied.
Taking right adjoints, we obtain functors:
\begin{align}
c^R: M \otimes_A \ls \vee M \to B, \\
u^R : A \to \ls \vee M \otimes_B M,
\end{align}
where $u^R$ is the right adjoint of $c^L$ and $c^R$ is the right adjoint of $u^L$. By the assumption of proper dualizability, $u^R$ and $c^R$ are continuous, i.e., morphisms in $\Cat$. Moreover, we observe that the functors $(c^R, u^R)$ satisfy the triangle identities for $^\vee M$ to be the right dual to $M$, by taking the right adjoints of the triangle identites for the left duality $(c^L, u^L)$.
\end{proof}

\subsection{Rigid monoidal categories}
A monoidal category $A$ is called rigid if it is compactly generated, and every compact object is (left and right) dualizable.\footnote{There is a more general definition of rigid monoidal category which does not require compact generation, see \cite{1affine}, D.1.1. The definitions agree in the compactly generated case.} Given a rigid monoidal category $A$, let $A'$ denote its dual as an object of $\Cat$ (recall that this can be constructed as the Ind-category of the opposite category of compact objects of $A$). The operations taking left and right duals define functors
\[
\phi^R, \phi^L: A \to A^\vee
\]
The composition $\varphi: (\phi^R )^{-1}\phi^L$ is a monoidal autoequivalence of $A$. A \emph{pivotal} structure on $A$ is a monoidal equivalence of functors $\alpha: \varphi \simeq id_A$.

\begin{theorem}[Gaitsgory, \cite{1affine} Appendix D]\label{thm:rigid}
Let $A$ be a rigid monoidal category.
\begin{enumerate}
\item A left $A$-module $L$ is dualizable if and only if $L$ is dualizable as an object of $\Cat$. 
\item If $A$ is equipped with a pivotal structure and $M$ is an $A$-bimodule, there is a canonical identification
\[
HH_\ast (A;M) \simeq HH^\ast(A;M).
\]
\end{enumerate}
\end{theorem}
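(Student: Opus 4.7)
The plan is to prove the two parts in sequence, with Part (1) providing the key input for Part (2). For Part (1), the forward direction---an $A$-module $L$ dualizable in $A$-mod is dualizable in $\Cat$---follows from the observation that rigidity of $A$ makes $A$ itself dualizable as an object of $\Cat$, as the ind-completion of its dualizable subcategory of compact objects. Given duality data $u^L: A \to L \otimes_A {}^\vee L$ and $c^L: {}^\vee L \otimes_S L \to A$ witnessing $A$-dualizability of $L$, I would obtain $\Cat$-duality data by pre- and post-composing with the unit $S \to A$ and the $\Cat$-level trace $A \to S$ provided by rigidity, verifying the triangle identities through a diagram chase.

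For the converse direction---the substantive content---I would start with a $\Cat$-dual $L^\vee$ and construct on it the structure of a right $A$-module together with $A$-linear duality data. The key input is that for rigid $A$, the unit and multiplication of $A$ each admit continuous right adjoints, endowing $A$ with the structure of a Frobenius-like algebra object of $\Cat$; this structure lets one transfer the $\Cat$-duality of $L$ into $A$-duality. Concretely, the action map $a: A \otimes L \to L$ has a $\Cat$-dual $a^\vee$, and combined with the rigidity identification $A^\vee \simeq A$, this induces a right $A$-module structure on $L^\vee$. The $A$-linear evaluation and coevaluation are then defined using the structure maps of $A$, and the triangle identities must be verified in the $\infty$-category of $A$-bimodules.

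Part (2) follows formally from Part (1) applied to the $A$-bimodule $A$. Indeed, since the monoidal category $A \otimes A^{op}$ is again rigid, Part (1) shows that $A$ is dualizable as an $A$-bimodule. Its bimodule dual is canonically identified with $A$ itself twisted by the Nakayama-like automorphism $\varphi = (\phi^R)^{-1}\phi^L$; a pivotal structure by definition trivializes this twist via $\alpha: \varphi \simeq \id_A$, yielding $A \simeq A^\vee$ as $A$-bimodules. The identification $HH_\ast(A; M) \simeq HH^\ast(A; M)$ then drops out from the bimodule-duality adjunction:
\[
HH^\ast(A; M) = \Hom_{A \otimes A^{op}}(A, M) \simeq A^\vee \otimes_{A \otimes A^{op}} M \simeq A \otimes_{A \otimes A^{op}} M = HH_\ast(A; M).
\]

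The main obstacle is the converse direction of Part (1): verifying that the $A$-action produced on $L^\vee$ and the associated evaluation and coevaluation satisfy the coherences needed for dualizability in the $\infty$-category of $A$-modules, rather than merely in $\Cat$. Carrying this out rigorously requires careful bookkeeping of higher-categorical compatibilities between the bar-resolution computation of relative tensor products and the rigidity structure of $A$, which is precisely the setting where Gaitsgory's foundational treatment of rigid monoidal categories in \cite{1affine} is designed to apply.
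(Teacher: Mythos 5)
This theorem is imported from Gaitsgory's paper, and the only argument the present paper supplies is the remark following the statement: part (2) is proved by noting that the structure maps of the simplicial bar object $\BAR_\bullet(A;M)$ all admit continuous right adjoints (by rigidity, together with the pivotal structure to match the right adjoint of the action map with the coaction), that the resulting cosimplicial diagram of right adjoints is the cobar object computing $HH^\ast(A;M)$, and then invoking Lemma \ref{lemma:limcolim} to identify the colimit of the former with the limit of the latter. Your route to part (2) is genuinely different and more structural: you apply part (1) to $A$ as a module over the rigid category $A\otimes A^{op}$, identify its bimodule dual with $A$ twisted by the Nakayama-type automorphism $\varphi=(\phi^R)^{-1}\phi^L$, use the pivotal structure to trivialize the twist, and then read off $HH^\ast(A;M)\simeq \Hom_{A\otimes A^{op}}(A,M)\simeq {}^\vee\! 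A\otimes_{A\otimes A^{op}}M\simeq HH_\ast(A;M)$ from the hom-tensor adjunction for dualizable modules (the paper's own Remark, item (2)). Both arguments are valid and are really two presentations of the same phenomenon: the bar/cobar comparison is the term-by-term unpacking of the self-duality of $A$ as a bimodule. The paper's version is more economical here because it feeds directly into Lemma \ref{lemma:limcolim} and the Beck--Chevalley machinery used later; yours makes the role of the pivotal structure (trivializing the Nakayama twist) conceptually transparent and derives (2) formally from (1).

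Two small cautions. In your forward direction of part (1), the proposed $\Cat$-duality data does not typecheck: precomposing $u^L\colon A\to L\otimes_A{}^\vee\! L$ with the unit $S\to A$ lands in the \emph{relative} tensor product $L\otimes_A{}^\vee\! L$, which is a colimit (quotient) of $L\otimes_S{}^\vee\! L$ rather than a subcategory, so you cannot lift it to coevaluation data over $S$. The correct mechanism is to compose dualizabilities: since $A$ is dualizable in $\Cat$ (being compactly generated), one has $\Hom_{\Cat}(L,V)\simeq\Hom_A(L,\Hom_S(A,V))\simeq ({}^\vee\! L\otimes_A A^\vee)\otimes V$, exhibiting ${}^\vee\! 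L\otimes_A A^\vee$ as the $\Cat$-dual. Second, your converse direction correctly isolates the key input (continuous, bimodule-linear right adjoints to the unit and multiplication of $A$) but defers all coherence verification to Gaitsgory; since the theorem is cited rather than proved in this paper, that deferral is acceptable, but it means the substantive content of (1) is not actually established by the proposal.
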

\begin{remark}
Part 2 of Theorem \ref{thm:rigid} is proved by observing that the structure morphisms in the simplicial bar object computing $HH_\ast(A;M)$ have continuous right adjoints, and the corresponding diagram of right adjoints is identified with the cosimplicial cobar object computing $HH^\ast(A;M)$. The identification of $HH_\ast(A;M)$ and $HH^\ast(A;M)$ then follows from Lemma \ref{lemma:limcolim}.
\end{remark}

\section{Proof of Theorem \ref{thm:morita}}
\label{sec:proof}

Fix monoidal categories $A$ and $B$, and properly dualizable bimodule $M = {}_B M_A$ with dual $\ls \vee M = \Hom_{A^{op}}(M,A)$. 

Note that the Hecke algebra $\cH := \cH(M) := \End_A(M)$ is identified with $M \otimes_A \ls \vee M$, by $A$-duality. There is an algebra morphism $B \to \cH$ given by the action of $B$ on $M$. Thus we may consider $\cH$ as an algebra object in $B-B$-bimodules. The $B-A$-bimodule $M$ carries a canonical left action of $\cH(M)$ which commutes with the $A$-action, thus promoting $M$ to a $\cH(M)-A$-bimodule; similarly, $\ls \vee M$ is naturally a $A-\cH(M)$ bimodule.

To compute the relative tensor product $\ls \vee M \otimes_\cH M$, we will use the bar construction of $\cH$ considered as an algebra object in the monoidal category of $B-B$-bimodules. Explicitly, we define the simplicial category $C_\bullet$ with simplices
\begin{align*}
C_n &= \ls \vee M \otimes_B \cH^{\otimes_B n} \otimes_B M\\
&\simeq \ls \vee M \otimes_B ( M \otimes_A \ls \vee M )^{\otimes_B n}\otimes _B M \\
& \simeq (\ls \vee M \otimes_B M)^{\otimes_A (n+1)}.
\end{align*}
The face maps $C_{\partial_n^i}: C_{n} \to C_{n-1}$ are given by inserting $c^L: \ls \vee M \otimes_B M \to A$ in the $i$th tensor factor of 
\[
C_n \simeq  (\ls \vee M \otimes_B M)^{\otimes_A (n+1)}, 
\]
and the degeneracy maps $C_{e_n ^j}: C_n \to C_{n+1}$ are given by inserting $u^L: B \to M \otimes_B \ls \vee M$ after the $j$th tensor factor of
\[
 C_n \simeq  \ls \vee M \otimes_B ( M \otimes_A \ls \vee M )^{\otimes_B n}\otimes _B M.
\]
If we set $C_{-1} = A$, the morphism $c^L:C_0 \to C_{-1}$ defines an augmentation of $C_\bullet$ defining an augmented simplicial object $\widetilde{C}_\bullet$. As the functors $c^L$ and $u^L$ have continuous right adjoints, it follows that the structure maps of $C_\bullet$ have continuous right adjoints, and thus the cosimplicial category $C^\bullet$ obtained by taking right adjoints is a cosimplicial object in $\Cat$.  
\begin{lemma}\label{lemma:bc}
The coaugmented cosimplicial category $\widetilde{C}^\bullet$ satisfies the monadic Beck-Chevalley conditions.
\end{lemma}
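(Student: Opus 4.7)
The plan is to verify the two monadic Beck--Chevalley conditions directly for $\widetilde{C}^\bullet$.

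For condition (1), the existence of left adjoints to the last coface maps is immediate from the construction: $C^\bullet$ was defined by taking right adjoints of all simplicial structure maps of $C_\bullet$, so the left adjoint of each coface map $C^{\partial^0_i}: C^i \to C^{i+1}$ is the corresponding simplicial face map $C_{\partial^0_i}$. For the coaugmentation $\widetilde{C}^{-1} = A \to \widetilde{C}^0 = \ls \vee M \otimes_B M$, the map is $u^R$, the right adjoint of $c^L$; this is in $\Cat$ by proper dualizability (Lemma \ref{lemmaleftright}) and has left adjoint $c^L$.

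For condition (2) I would work in the left-adjoint picture. Using the identifications $(C^{\partial^0_i})^L = C_{\partial^0_i}$ and $C^\alpha = (C_\alpha)^R$ (the right adjoint of the corresponding simplicial structure map), the BC square reduces to the projection-formula statement
\[
C_{\partial^0} \circ (C_\alpha)^R \simeq (C_\alpha)^R \circ C_{\partial^0}.
\]
The key observation is that, in the description $C_n \simeq (\ls \vee M \otimes_B M)^{\otimes_A (n+1)}$, the 0th face map $C_{\partial^0}$ contracts the leftmost tensor factor via $c^L$, whereas every simplicial structure map $C_\alpha$ appearing in a BC square acts only on the remaining tensor factors --- the $\alpha$-induced map preserves the 0th vertex, by the shifting convention implicit in the BC definition. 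Hence $C_{\partial^0}$ and $C_\alpha$ act on disjoint positions in the iterated tensor product, and the strict commutativity $C_{\partial^0} \circ C_\alpha = C_\alpha \circ C_{\partial^0}$ holds on the nose. Proper dualizability then ensures that all right adjoints $u^R$ and $c^R$ comprising $(C_\alpha)^R$ are continuous, and the tensor-compatibility of adjoints in $\Cat$ --- namely $(F \otimes \id)^R = F^R \otimes \id$ for $F: X \to X'$ a morphism in $\Cat$ with continuous right adjoint --- delivers the required BC identity as the mate of this strict commutativity.

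The main obstacle is the careful treatment of tensor-compatibility of right adjoints in the $\infty$-categorical setting, together with the combinatorial bookkeeping confirming that each $\alpha$ occurring in the BC condition really does give rise to a $C_\alpha$ that acts trivially on the 0th tensor factor. Proper dualizability is what makes this go through: it guarantees that position-wise right adjoints remain in $\Cat$, so that tensor products distribute correctly over them and the mate of the strict simplicial identity is indeed the desired BC equivalence. The coaugmented level, indexed by $[-1]$, is handled by the same positional argument, with the role of $c^L$ at position $0$ played by the augmentation $c^L : C_0 \to A$ and its right adjoint $u^R$.
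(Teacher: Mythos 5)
Your proof is correct and follows essentially the same route as the paper: the left adjoints exist because $C^\bullet$ is obtained from the simplicial bar object $C_\bullet$ by passing to right adjoints (continuity being supplied by proper dualizability), and each Beck--Chevalley square commutes because the distinguished (co)face contracts one end of $(\ls\vee M\otimes_B M)^{\otimes_A(n+1)}$ while the maps induced by $\alpha$ act on the remaining factors, so the square is the mate of a strictly commuting one. The paper simply exhibits one such square (for $\alpha\colon[-1]\to[0]$, where it reads $(\mathrm{id}\otimes c^L)\circ(u^R\otimes\mathrm{id})\simeq u^R\circ c^L$) and declares the rest similar; your positional argument is the general form of that inspection, modulo an immaterial choice of which end of the tensor product the distinguished face contracts.
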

\begin{proof}
We should check that, for each $\alpha: [i] \to [j]$ in $\Delta^+$, the square
\[
\xymatrix{
C^{i+1} \ar[r]^{(C^{\partial^0_i})^L}  & C^i \\
C^{j+1} \ar[r]_{(C^{\partial^0_i})^L} \ar[u]^{C^\alpha}& C^j \ar[u]_{C^\alpha},
}
\]
commutes.
For example, in the case when $\alpha$ is the unique map $[-1] \to [0]$, we have the diagram
\[
\xymatrix{
\End_B(M) \otimes_A \End_B(M) \ar[rrr]^{id \otimes c^L} &&& \End_B(M) \\
 \ar[rrr]^{c^L}  \End_B(M) \ar[u]^{u^R \otimes id} &&& A \ar[u]^{u^R}\\
}
\]
which commutes by inspection. The remaining cases are similar. 
\end{proof}

Note that we have a commutative diagram of monoidal functors:
\[
\xymatrix{ 
C^{-1} = A \ar[r]^{u^R}  \ar[rd]^{\overline{u}^R} & \End_B(M) =C^0\\
& \End_{\cH}(M) =\Tot(C^\bullet) \ar[u].
}
\]

\begin{proposition}\label{prop:descent}
The functor $\overline{u^R}$ has a fully faithful, continuous, monoidal left adjoint, $\overline{c}^L$, and the essential image of $\overline{c}^L$ is Morita equivalent to $\cH$. Moreover, if $F$ is conservative, then $\overline{F}$ is an equivalence.
\end{proposition}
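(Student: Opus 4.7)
The argument should be essentially an invocation of monadic descent. Lemma \ref{lemma:bc} has just verified the monadic Beck-Chevalley conditions for the coaugmented cosimplicial category $\widetilde{C}^\bullet$, so I would apply Theorem \ref{thm:md} directly: this produces the fully faithful left adjoint $\overline{c}^L: \Tot(C^\bullet) \to A$ to $\overline{u^R}$, assembled from the augmentation $c^L: C_0 = \ls\vee M \otimes_B M \to A$ and the compatible left adjoints at higher levels of $C_\bullet$. The same theorem gives the final clause: if $F = u^R: A \to C^0 = \End_B(M)$ is conservative, then $\overline{F} = \overline{u^R}$ is an equivalence.

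Continuity of $\overline{c}^L$ is automatic: as a left adjoint it preserves all colimits and is thus a morphism in $\Cat$. Monoidality requires more care, since a left adjoint of a strong monoidal functor is \emph{a priori} only oplax. I would argue this in one of two equivalent ways. Constructively, $\overline{c}^L$ is assembled from maps of $A$-bimodules that intertwine the tensor operations (the product on $\Tot(C^\bullet) = \ls\vee M \otimes_\cH M$ uses the identification $M \otimes_A \ls\vee M \simeq \cH$, which $c^L$ respects). Abstractly, since $\overline{c}^L$ is fully faithful and $\overline{u^R}$ is monoidal, the essential image of $\overline{c}^L$ is closed under the tensor product of $A$, so $\overline{c}^L$ restricts to a monoidal equivalence with this image.

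For the Morita claim, Lemma \ref{lemma:limcolim} applied to $C_\bullet$ together with the computation $\colim C_\bullet \simeq \ls\vee M \otimes_\cH M$ furnished by the bar construction identifies the essential image of $\overline{c}^L$ with $\End_\cH(M)$. The Morita equivalence between this image and $\cH$ is then realized by the tautological $\End_\cH(M)$-$\cH$-bimodule $M$: the identification $\ls\vee M \otimes_\cH M \simeq \End_\cH(M)$ (from the totalization computation just used) and $M \otimes_A \ls\vee M \simeq \cH$ (by $A$-dualizability) encode the two halves of the equivalence, with $M$ functioning as a compact generator of $\cH$-$\module$ since $\cH$ is itself built from $M$ and $\ls\vee M$.

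The main obstacle I anticipate is verifying \emph{strong} (as opposed to merely oplax) monoidality of $\overline{c}^L$; all other parts of the proposition reduce quickly once Theorem \ref{thm:md} and the bar identifications are in place. Proper dualizability of $M$ should do the essential work here, ensuring that the duality data intertwines compatibly with the monoidal products at every level of $C_\bullet$, so that the structure maps of the bar object are not merely morphisms of categories but of monoidal bimodules.
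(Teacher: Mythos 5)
Your proposal follows the paper's proof essentially step for step: invoke Theorem \ref{thm:md} via Lemma \ref{lemma:bc} to get the fully faithful left adjoint and the conservativity clause, upgrade the oplax monoidal structure on $\overline{c}^L$ to a strong one using full faithfulness (your ``abstract'' option is exactly the paper's argument), and obtain the Morita equivalence from the identifications $\ls\vee M \otimes_\cH M \simeq \Tot(C^\bullet) \simeq A$ (after replacing $A$ by the essential image) and $M \otimes_A \ls\vee M \simeq \cH$. This matches the paper's reasoning, so no further comparison is needed.
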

\begin{proof}
The augmented cosimplicial object $\widetilde{C}^\bullet$ satisfies the Beck-Chevalley conditions. Thus, by Theorem \ref{thm:md}, $\overline{u}^R$ has a fully faithful left adjoint $\overline{c}^L$. Moreover, if $u^R$ is conservative, then $\overline{u}^R$ is an equivalence. 

The functor $\overline{c}^L$ is naturally monoidal, by virtue of being the fully faithful left adjoint to the monoidal map $\overline{u}^R$ ($\overline{c}^L$ is naturally oplax monoidal as the left adjoint to $\overline{u}^R$; as $\overline{u}^R\overline{c}^L \simeq id_{\End_B(M)}$, it follows that the monoidal structure of $\overline{c}^L$ is strong). 

It remains to show that $M$ defines a Morita equivalence between the essential image of $\overline{c}^L$ and $\cH$. Replacing $A$ by the essential image of $\overline{c}^L$, we may assume $u^R$ is conservative. Note that 
\begin{align*}
\ls \vee M \otimes_\cH M &\simeq | C_\bullet | \\
&\simeq \Tot(C^\bullet)\\
&\simeq A.
\end{align*}
Note also that the augmented cosimplicial category $\widetilde{C}^{\bullet}$ is naturally an augmented cosimplicial object of $A-A$-bimodules. In particular, the morphism $\overline{u}^R$ is a morphism of $A-A$-bimodules. As we have already noted $\cH \simeq M \otimes_A \ls \vee M$ as $\cH-\cH$-bimodules, and thus $A$ is Morita equivalent to $\cH$ as claimed.
\end{proof}

\begin{remark}
The Beck-Chevalley conditions for the augmented cosimplicial object $\widetilde{C}^{\bullet}$ imply a projection formula for the adjunction $u^R: A \leftrightarrows \End_B(M):c^L$
\[
c^L(u^R(a) \ast b) \simeq a \ast c^L(b).
\]
In particular, the object $\Spr_M := c^L(id_M) \in A$ is a coalgebra object which we call the \emph{Springer object}. \footnote{In the case $A= \cD(G)$, $M= \cD(G/N)$, and $B=\cD(H)$ (using the notiation of Section \ref{sec:proof}), the object $\Spr_M$ is the Springer sheaf.} The comonad $c^L u^R$ acting on $A$ is given by $a \mapsto \Spr_M \ast a$. Similarly, the object $e_M := \overline{c}^L(id_M)$ is an idempotent coalgebra object (it is the unit for the monoidal structure on the essential image of $\overline{c^L}$), which represents the idemotent comonad $\overline{c}^L\overline{u}^R$. In other words, the essential image of $\overline{c}^L$ is naturally identified with $e_M$-comodule objects in $A$.
\end{remark}

\section{Modules for the categorical group algebra}

\subsection{$G$-invariants and coinvariants}
In order to apply the results of the previous section, we need the following important consequence of Gaitsgory's 1-affineness theorem.

\begin{theorem}[\cite{1affine}, \cite{dario} Theorem 3.5.7]\label{theorem:D(G)andHC}
For any complex reductive group $G$, the $\cD(G)$-module $\cU(\fg)-\module$ defines a Morita equivalence between $\cD(G)$ and $\End_{\cD(G)}(\cU(\fg)) \simeq \HC_G$.
\end{theorem}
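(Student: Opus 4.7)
The plan is to deduce Theorem \ref{theorem:D(G)andHC} as a direct application of the abstract Morita Theorem \ref{thm:morita}. I would set $A = \cD(G)$, $B = S$ (the unit of $\Cat$), and take $M = \cU(\fg)-\module$ viewed as a $\cD(G)$-module via the canonical strong $G$-action coming from the adjoint action of $G$ on $\fg$. The Hecke algebra of this bimodule is then $\cH(M) = \End_{\cD(G)}(\cU(\fg)-\module)$, which under the standard identification is precisely $\HC_G$. A successful application of Theorem \ref{thm:morita} in this setting yields exactly the claimed Morita equivalence $\HC_G-\module \simeq \cD(G)-\module$.

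The two hypotheses to verify are (a) $M$ is properly dualizable as a $\cD(G)$-module, and (b) the action map $\cD(G) \to \End_k(\cU(\fg)-\module)$ is conservative. For ordinary dualizability in (a), rigidity of $\cD(G)$ (which holds for any algebraic group, and for reductive $G$ in particular) combined with Theorem \ref{thm:rigid}(1) reduces the question to dualizability of $M$ in $\Cat$. This is immediate from compact generation: the free module $\cU(\fg)$ of rank one is a compact generator. One expects the dual $\ls \vee M$ to be realized as $\cU(\fg)-\module$ again via an appropriate self-duality coming from the interplay between left and right regular $G$-actions.

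For (b), conservativity of the action should follow from a direct faithfulness argument: if $F \in \cD(G)$ acts as the zero functor on $\cU(\fg)-\module$, then in particular $F$ acts as zero on $\cU(\fg)$ itself; since $\cU(\fg)$ encodes the universal strong $G$-action and the unit of $\cD(G)$ acts as the identity, this should force $F = 0$. This step is essentially formal once the correct framework is set up.

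The principal obstacle is the \emph{proper} part of proper dualizability, namely the continuity of the right adjoints to the evaluation and coevaluation morphisms. This is not formal from rigidity alone; it requires precise control of compact objects in the relative tensor products $M \otimes_A \ls \vee M$ and $\ls \vee M \otimes_A M$ and of how the duality pairings behave on them. This is exactly the content packaged by Gaitsgory's 1-affineness theorem for $BG$ \cite{1affine} and its de Rham formulation by Beraldo \cite{dario}. Granting that input, the proof of Theorem \ref{theorem:D(G)andHC} reduces to a formal application of Theorem \ref{thm:morita}.
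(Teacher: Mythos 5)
This statement is not proved in the paper at all: it is imported verbatim from Gaitsgory's 1-affineness paper \cite{1affine} and Beraldo's thesis \cite{dario} (Theorem 3.5.7), and the paper uses it as a black box. So there is no internal proof to compare against; what I can do is assess your sketch on its own terms. Your strategy --- run Theorem \ref{thm:morita} with $A=\cD(G)$, $B=S$, $M=\cU(\fg)\text{-}\module$ --- is structurally the same template the authors use for $\cD(N\bs G)$, and it is a sensible way to \emph{organize} a proof. But as you concede in your last paragraph, the two load-bearing inputs (proper dualizability of $\cU(\fg)\text{-}\module$, and the identification $\End_{\cD(G)}(\cU(\fg)\text{-}\module)\simeq \HC_G$) are precisely the content of the cited 1-affineness results. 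Granting those and then "applying Theorem \ref{thm:morita}" is circular relative to the theorem being proved: you have reduced the statement to its own source, not given an independent argument. In the geometric case the paper can verify proper dualizability by hand, because the unit and counit are $\alpha_\ast\beta^!$, $\delta_\ast\gamma^!$ for an explicit correspondence with $\alpha,\delta$ proper and $\beta,\gamma$ smooth (Lemma \ref{lemma:dual}); for $\cU(\fg)\text{-}\module$ there is no such elementary geometric handle, and rigidity of $\cD(G)$ plus Theorem \ref{thm:rigid}(1) only yields ordinary dualizability, not continuity of the right adjoints of evaluation and coevaluation.

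Two further soft spots. First, your conservativity argument is not yet an argument: "the unit of $\cD(G)$ acts as the identity" is true for \emph{any} module category and implies nothing about conservativity. The real mechanism is the identification $\cU(\fg)\text{-}\module\simeq \cD(G)^{G,w}$ (weakly equivariant $\cD$-modules for a regular action), under which one computes that $\cF\in\cD(G)$ acting on the generator $\cD_G$ recovers $\cF$ up to a faithful operation; compare how the paper proves conservativity in its own setting by identifying $c^Lu^R$ with convolution against the Springer sheaf and exhibiting $\delta_e$ as a summand. Some such explicit computation of the monad is unavoidable. Second, the identification of $\cH(M)$ with $\HC_G$ is itself nontrivial (it is part of what \cite{dario} proves), so it cannot be taken as notation. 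In short: the outline is consistent with the paper's philosophy, but it does not constitute a proof; it is a restatement of the theorem as an instance of Theorem \ref{thm:morita} whose hypotheses are verified only by appeal to the references the theorem is already attributed to.
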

\begin{remark}
The functor $(-)\otimes_{\cD(G)} \left(\cU(\fg)-\module\right)$ from $\cD(G)-\module$ to $\HC_G-\module$ is known as the the functor of \emph{weak} $G$-coinvariants. Given a stack $X$ with an action of $G$, $\cD(X)$ is a  $\cD(G)$-module, and the corresponding $\HC_G$-module, $\cD(X) \otimes_{\cD(G)} \left(\cU(\fg)-\module\right)$, can be identifed with the category $\cD(\wq GX)$ of $G$-weakly equivariant objects of $\cD(X)$.  
\end{remark}

\begin{proposition}[\cite{dario}, section 2.5]\label{prop:rigidpivotal}
The monoidal category $\HC_G$ is rigid and pivotal.
\end{proposition}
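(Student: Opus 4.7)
The plan is to realize $\HC_G$ as a convolution category of D-modules on a familiar stack, and then to transfer the rigid and pivotal structures from the standard rigid monoidal category $\cD(G)$.

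First, I would identify $\HC_G$ with the monoidal category of weakly adjoint-equivariant $\cD$-modules on $G$ (with convolution product). This uses the fact that $\cU(\fg)\text{-}\module$ is the category of weakly $G$-equivariant $\cD$-modules on a point, on which $\cD(G)$ acts via left translations; computing $\End_{\cD(G)}$ by the bar construction, together with descent from $\cD(G) \otimes_{\cD(G \times G)} \cD(G)$, yields an identification of $\HC_G$ with the convolution category of weakly adjoint-equivariant $\cD$-modules on $G$.

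Next, for rigidity, I would appeal to the well-known fact that $\cD(G)$ under convolution is rigid: the unit $\delta_e$ is compact, and the duality functor is given by $M \mapsto \mathrm{inv}^*\mathbb{D}M$, where $\mathrm{inv}:G\to G$ is inversion and $\mathbb{D}$ is Verdier duality. Both $\mathrm{inv}$ and $\mathbb{D}$ intertwine the conjugation action (inversion is $\mathrm{Ad}$-equivariant, and Verdier duality commutes with smooth pullbacks), so this rigid structure descends to the weakly adjoint-equivariant subcategory, yielding rigidity of $\HC_G$. For the pivotal structure, I would use that $M^{\vee\vee} = \mathrm{inv}^*\mathbb{D}\,\mathrm{inv}^*\mathbb{D}\, M$ is canonically equivalent to $M$: inversion is an involution, and Verdier biduality on the smooth scheme $G$ is canonical, with no shift corrections since a reductive group is unimodular. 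Verifying this equivalence is monoidal uses that inversion is a group anti-homomorphism and that Verdier duality intertwines convolution with convolution precomposed with inversion. The resulting pivotal structure on $\cD(G)$ descends to $\HC_G$ by compatibility with conjugation.

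The main obstacle will be verifying the pivotal coherence condition: that the canonical biduality $M^{\vee\vee} \simeq M$ is compatible with convolution, not merely as an equivalence of functors but as an equivalence of \emph{monoidal} functors. In the $\infty$-categorical setting of $\Cat$, this amounts to keeping track of higher homotopies relating convolution, inversion, and Verdier duality, and checking that these homotopies are compatible with the weak-equivariantization procedure. A secondary technical point is ensuring that compactness and dualizability descend through weak equivariantization; this is where the distinction between weak and strong $G$-invariants matters and where Theorem \ref{thm:rigid} together with Gaitsgory's 1-affineness provide the necessary input.
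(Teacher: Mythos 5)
Your route is genuinely different from the paper's, and as written it has a gap at the decisive step. The paper does not argue via $\cD(G)$ at all: it invokes the general transfer principle for rigidity (\cite{1affine}, Appendix D; \cite{dario}, \S 2.5) applied to the monoidal functor $\Rep(G) \to \HC_G$, $V \mapsto V \otimes \cU(\fg)$, whose right adjoint (remember only the integrable diagonal $G$-action) is continuous and conservative. Since $\Rep(G)$ is rigid and pivotal, so is $\HC_G$. The mechanism is that dualizability transfers \emph{forward} along a monoidal functor, and monadicity of the right adjoint guarantees that the images of the compact generators of $\Rep(G)$ are compact generators of $\HC_G$, so every compact object of $\HC_G$ is dualizable.

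Your ``descent'' step is exactly where this logic is missing. First, a misidentification: $\HC_G$ is the category of $\cD$-modules on $G$ weakly equivariant for left and right translations separately (equivalently, $\cU(\fg)$-bimodules with integrable diagonal action), not the weakly adjoint-equivariant category $\cD(G)^{G_{\ad},w}$ --- the latter is the home of character sheaves, a different category. More seriously, the monoidal structure on $\HC_G$ is convolution over the \emph{weak} quotient, i.e.\ tensor product over $\cU(\fg)$ (pushforward from $G \times^{G_w} G$ rather than from $G\times G$), which is not the restriction of convolution on $\cD(G)$; the forgetful functor $\HC_G \to \cD(G)$ is not monoidal for these structures. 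Hence knowing that $M \mapsto \mathrm{inv}^*\mathbb{D}M$ furnishes duals in $(\cD(G),\ast)$ and that this functor is conjugation-compatible produces no evaluation/coevaluation data in $\HC_G$: dualizability of $F(X)$ for a functor $F$ out of $\HC_G$ does not imply dualizability of $X$. The same problem recurs, worse, for the pivotal coherence. To repair the argument you need a monoidal functor \emph{into} $\HC_G$ from a rigid pivotal source whose right adjoint is continuous and conservative; $\Rep(G)$ is the natural choice (its rigidity and pivotality are immediate, whereas for $\cD(G)$ even rigidity requires an argument about coherent $\cD$-modules and Verdier duality), and that is precisely the paper's proof.
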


Proposition \ref{prop:rigidpivotal} follows from the fact that there is a monoidal functor $\Rep(G) \to \HC_G$ with a continuous and conservative right adjoint, and the fact that $\Rep(G)$ itself is rigid and pivotal.

Given a $\cD(G)$-module $M$, let $M^G = \Hom_{\cD(G)}(\Vect, M)$ denote the strong $G$-invariants, and $M_G = \Vect \otimes_{\cD(G)} M$ the strong $G$-coninvariants.
\begin{proposition}[\cite{dario}, Theorem 3.6.1]
If $M$ is a $\cD(G)$-module, then $M_G \simeq M^G$.
\end{proposition}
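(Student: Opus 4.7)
The plan is to transfer the desired equivalence to a statement about $\HC_G$-modules via Theorem \ref{theorem:D(G)andHC}, and then apply the rigid/pivotal structure on $\HC_G$ to identify invariants with coinvariants.

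First, I would apply the Morita equivalence $\cD(G)-\module \simeq \HC_G-\module$ to translate the two functors in question. Writing $\widetilde{M} := M \otimes_{\cD(G)} (\cU(\fg)-\module)$ for the $\HC_G$-module corresponding to $M$, and letting $V$ denote the image of the trivial $\cD(G)$-module $\Vect$, the equivalence (being compatible with both $\Hom$ and relative tensor constructions against the invertible bimodule $\cU(\fg)-\module$) produces canonical identifications
\begin{align*}
M_G \simeq V \otimes_{\HC_G} \widetilde M, \qquad M^G \simeq \Hom_{\HC_G}(V,\widetilde M).
\end{align*}
Thus it suffices to prove these two expressions are equivalent for every $\widetilde{M}$.

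Second, since $\HC_G$ is rigid by Proposition \ref{prop:rigidpivotal}, Theorem \ref{thm:rigid}(1) implies that the $\HC_G$-module $V$ is dualizable (it is already dualizable as an object of $\Cat$ by compact generation). Dualizability yields an adjunction formula $\Hom_{\HC_G}(V,\widetilde M) \simeq \ls\vee V \otimes_{\HC_G} \widetilde M$, so the claim reduces to producing a canonical self-duality $V \simeq \ls\vee V$ of $\HC_G$-modules.

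Third, this self-duality is the essential use of the pivotal structure. Mimicking the proof of Theorem \ref{thm:rigid}(2), I would compute $V \otimes_{\HC_G} \widetilde M$ as the geometric realization of the two-sided bar construction $\BAR_\bullet(\HC_G; V, \widetilde M)$, observe that each face and degeneracy map admits a continuous right adjoint by rigidity, and use the pivotal structure to identify the resulting cosimplicial object of right adjoints with the cobar construction computing $\Hom_{\HC_G}(V, \widetilde M)$. Lemma \ref{lemma:limcolim} then identifies the colimit with the limit, giving the desired equivalence $M_G \simeq M^G$.

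The main obstacle I foresee is verifying the pivotal compatibility at the level of simplicial structure maps: this requires extending the bimodule-coefficient argument sketched after Theorem \ref{thm:rigid} to the one-sided case with an arbitrary $\HC_G$-module $\widetilde M$, and checking that the pivotal isomorphism $\varphi \simeq \id_{\HC_G}$ produces a coherent identification of the bar and cobar diagrams which is compatible with the $\HC_G$-actions on $V$ and $\widetilde M$. Once this bookkeeping is handled, the remainder of the proof closes cleanly from the previously established results.
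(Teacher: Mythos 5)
First, note that the paper does not prove this proposition at all: it is imported verbatim from Beraldo's thesis (\cite{dario}, Theorem 3.6.1). Your strategy --- transfer along the Morita equivalence of Theorem \ref{theorem:D(G)andHC} and then exploit rigidity of $\HC_G$, since $\cD(G)$ itself is not rigid --- is essentially the argument of the cited source, and your first two steps are sound: an invertible bimodule does carry $\Vect\otimes_{\cD(G)}(-)$ to $V\otimes_{\HC_G}(-)$ and $\Hom_{\cD(G)}(\Vect,-)$ to $\Hom_{\HC_G}(V,-)$, and Theorem \ref{thm:rigid}(1) reduces $\HC_G$-dualizability of $V$ to dualizability in $\Cat$ (which holds; indeed $V=\Vect\otimes_{\cD(G)}(\cU(\fg)\text{-}\module)\simeq\Rep(G)$, though you should say a word about why $V$ is compactly generated rather than just asserting it).

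The point you defer as ``bookkeeping,'' however, is the entire mathematical content of the statement, and your sketch does not close it. Whether you phrase it as $\Hom_{\HC_G}(V,\widetilde M)\simeq \ls\vee V\otimes_{\HC_G}\widetilde M$ or as the bar/cobar comparison via Lemma \ref{lemma:limcolim}, what comes out of rigidity is an identification of $\Hom_{\HC_G}(V,\widetilde M)$ with $\widetilde M$ tensored against the \emph{dual} module $\ls\vee V$ (concretely: for rigid $A$ the action map $A\otimes L\to L$ of any module has a continuous, $A$-linear right adjoint, so the $A$-module dual of a module dualizable in $\Cat$ is its plain categorical dual with the transposed action). Pivotality of $\HC_G$ identifies the two duals of $\HC_G$ itself and handles bimodule coefficients as in Theorem \ref{thm:rigid}(2), but it says nothing a priori about the particular module $V$: you still owe a canonical equivalence $\ls\vee V\simeq V$ of $\HC_G$-modules, compatible with the left/right switch implicit in forming $V\otimes_{\HC_G}\widetilde M$. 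This is an extra input about $V$ specifically --- for $V\simeq\Rep(G)$ it comes from the standard self-duality on compact objects together with the fact that the $\HC_G$-action is induced from the pivotal symmetric monoidal category $\Rep(G)$ of Proposition \ref{prop:rigidpivotal} --- and it is exactly where the proof lives, not a coherence formality. With that equivalence supplied, the rest of your argument goes through.
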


Now let $X$ be a perfect stack with an action of $G$. We have the \v Cech simplicial stack with $n$-simplices $X\times G^{\times n}$ which is augmented by $X/G$. Taking categories of $D$-modules, we get a cosimplicial category with $n$-simplices $\cD(X\times G^{\times n})$, and structure maps of the form $f^!$ for each map $f$ in the Cech simplicial stack. After identifying $\cD(X\times G^{\times n})$ with $\cD(X) \otimes \cD(G)^{\otimes n}$, this cosimplicial category becomes identified with $\BAR^\bullet(\cD(G); \cD(X), \Vect)$. Thus, there is a natural equivalence $\cD(X)^G \simeq \cD(X/G)$. The result of Beraldo above implies that we can also identify the coinvariants $\cD(X)_G$ with $\cD(X/G)$. In fact, we have the following result.

\begin{proposition}[\cite{dario}]
\label{prop:tensorquotient}
Let $X$ be a perfect stack with an action of $G$. The canonical augmentations
\[
\cD(X/G) \xrightarrow{p^!} \BAR^\bullet(\cD(G);\cD(X), \Vect)
\]
and
\[
\BAR_\bullet(\cD(G);\cD(X), \Vect) \xrightarrow{p_\ast} \cD(X/G)
\]
realize $\cD(X/G)$ as the totalization and geometric realization respectively.
\end{proposition}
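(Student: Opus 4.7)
My plan is to recognize both equivalences as instances of $\cD$-module descent along the atlas $p: X \to X/G$, combined with Lemma~\ref{lemma:limcolim}. First I would identify the cosimplicial object $\BAR^\bullet(\cD(G); \cD(X), \Vect)$ with the \v{C}ech cosimplicial category obtained by applying $\cD(-)$ (with $!$-pullback face maps) to the \v{C}ech nerve of $p$. That nerve has $n$-simplex $X \times G^n$, and the external K\"unneth isomorphism $\cD(X \times G^n) \simeq \cD(X) \otimes \cD(G)^{\otimes n}$ intertwines the face maps built from the $G$-action, $G$-multiplication, and augmentation $G \to \mathrm{pt}$ with those of the bar construction. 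The first claim $\cD(X/G) \xrightarrow{p^!} \Tot \BAR^\bullet$ is then smooth descent for $\cD$-modules on perfect stacks, which is an application of Gaitsgory's $1$-affineness of $BG$ (applied fibrewise over $X/G$) --- the same input that underlies Theorem~\ref{theorem:D(G)andHC}.

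For the second equivalence I would apply Lemma~\ref{lemma:limcolim} directly to the simplicial diagram $\BAR_\bullet(\cD(G); \cD(X), \Vect)$. Its face and degeneracy maps are assembled from the action map $\cD(X)\otimes \cD(G)\to \cD(X)$, the multiplication $\cD(G)\otimes \cD(G)\to \cD(G)$, and the augmentation $\cD(G)\to \Vect$, each of which is a pushforward along a smooth map in the \v{C}ech nerve and therefore admits a continuous right adjoint given by the corresponding $!$-pullback. Under the K\"unneth identification of the first step, these right adjoints reassemble precisely into the cosimplicial diagram $\BAR^\bullet$ from the first half. Lemma~\ref{lemma:limcolim} then yields
\[
|\BAR_\bullet(\cD(G); \cD(X),\Vect)| \;\simeq\; \Tot \BAR^\bullet(\cD(G); \cD(X),\Vect) \;\simeq\; \cD(X/G),
\]
with the realization map induced by $p_*$, as asserted.

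The step I expect to be the main obstacle is verifying that the right adjoints of the structure maps of $\BAR_\bullet$ really do assemble, coherently over the simplex category, into the cosimplicial object $\BAR^\bullet$. This reduces to a system of base-change and projection-formula coherences for $\cD$-modules along the smooth maps of the \v{C}ech nerve of the $G$-action; these are standard on perfect stacks, but constitute the genuine technical content that must be in place before Lemma~\ref{lemma:limcolim} can be invoked. Once that coherence is set up, the two statements of the proposition become the two sides of a single adjoint picture of the cosimplicial \v{C}ech descent diagram for $p: X \to X/G$.
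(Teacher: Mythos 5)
Your first half is essentially right and matches the paper's setup: one identifies $\BAR^\bullet(\cD(G);\cD(X),\Vect)$ with the $!$-pullback \v{C}ech cosimplicial category of $p\colon X\to X/G$ via K\"unneth, and the totalization statement is smooth descent for $\cD$-modules (this is more elementary than $1$-affineness of $BG$ --- it is essentially how $\cD$ of a quotient stack is defined --- but that is a misattribution, not an error). Note that the paper gives no proof of this proposition at all: it is imported from Beraldo's thesis, with the geometric-realization half deduced from the immediately preceding Proposition that $M_G\simeq M^G$ for any $\cD(G)$-module $M$.

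The genuine gap is in your second half, in the claim that each face map of $\BAR_\bullet$ ``is a pushforward along a smooth map and therefore admits a continuous right adjoint given by the corresponding $!$-pullback.'' This has the adjunctions backwards. For a smooth map $f$ of relative dimension $d$ one has $f^!\simeq f^*[2d]$, so $f^!$ is a shift of the \emph{left} adjoint of $f_*$; consequently it is $f^!$ that admits a continuous right adjoint (namely $f_*[-2d]$), while $f_*$ has continuous right adjoint $f^!$ only when $f$ is \emph{proper}. The face maps of $\BAR_\bullet(\cD(G);\cD(X),\Vect)$ are de Rham pushforwards along the projection/action maps $X\times G^{n}\to X\times G^{n-1}$, which are smooth but not proper (the fiber is the affine group $G$); such pushforwards in general do not even preserve compact objects (for $\pi\colon G\to pt$, the pushforward of the compact object $\cD_G$ has infinite-dimensional cohomology $\Gamma(G,\omega_G)$), so they admit no continuous right adjoint and Lemma~\ref{lemma:limcolim} cannot be applied in the direction you propose. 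This smooth-versus-proper dichotomy is exactly the point the paper is careful about later, when it checks that $\alpha_*,\delta_*$ (proper pushforwards) and $\beta^!,\gamma^!$ (smooth $!$-pullbacks) have continuous right adjoints. The identification $\lvert\BAR_\bullet\rvert\simeq\Tot(\BAR^\bullet)$ is precisely the nontrivial content of the proposition --- it is equivalent to $\cD(X)_G\simeq\cD(X)^G$ --- and is not a formal consequence of passing to adjoints of the bar diagram; your argument never uses that $G$ is reductive (or any special feature of $\cD(G)$), which is a sign that something must be missing. The intended source of this step is Beraldo's Theorem~3.6.1, quoted just before the proposition.
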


\begin{corollary}
\label{cor:hh}
Let $G$ be a reductive groups and $X$ and $Y$ perfect stacks with an action of $G$.  Then there is a canonical equivalence
\[
\cD(X) \otimes _{\cD(G)} \cD(Y) \simeq \cD\left((X \times Y)/G\right).
\]
\end{corollary}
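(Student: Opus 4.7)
The plan is to present both sides as geometric realizations of naturally equivalent simplicial objects in $\Cat$. By definition, the relative tensor product $\cD(X)\otimes_{\cD(G)} \cD(Y)$ is the geometric realization of the bar complex $\BAR_\bullet(\cD(G);\cD(X),\cD(Y))$, whose category of $n$-simplices is
\[
\cD(X) \otimes \cD(G)^{\otimes n} \otimes \cD(Y),
\]
with face maps induced by the right action $\cD(X)\otimes\cD(G)\to \cD(X)$, convolution $\cD(G)\otimes\cD(G)\to \cD(G)$, and the left action $\cD(G)\otimes\cD(Y)\to\cD(Y)$. On the other hand, Proposition~\ref{prop:tensorquotient}, applied to the perfect stack $X\times Y$ with the diagonal $G$-action, realizes $\cD((X\times Y)/G)$ as the geometric realization of $\BAR_\bullet(\cD(G);\cD(X\times Y),\Vect)$.

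The first key step is to invoke the K\"unneth formula for $\cD$-modules on perfect stacks, $\cD(U)\otimes \cD(V)\simeq \cD(U\times V)$, to identify
\[
\cD(X)\otimes \cD(G)^{\otimes n}\otimes \cD(Y) \simeq \cD(X\times G^n\times Y).
\]
After this reshuffling of tensor factors, both simplicial categories have $n$-simplices $\cD(X\times G^n\times Y)$, and the task reduces to matching the structure maps.

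The second step is to verify this combinatorial matching. Both simplicial objects are obtained by applying $\cD(-)$ to a simplicial stack whose $n$-th level is $X\times G^n\times Y$, with face maps expressing (i) the right $G$-action on $X$, (ii) multiplication in $G$, and (iii) the left $G$-action on $Y$. For the bar complex $\BAR_\bullet(\cD(G);\cD(X),\cD(Y))$ this is immediate from the definitions; for the bar complex $\BAR_\bullet(\cD(G);\cD(X\times Y),\Vect)$ computing $\cD((X\times Y)/G)$ via Proposition~\ref{prop:tensorquotient}, the diagonal action on $X\times Y$ is precisely what is needed so that the combined face map using the $G$-action on $X\times Y$ factors through the independent right and left actions on $X$ and $Y$ once the convolution factors are placed in the middle. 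Assembling these matchings yields an equivalence of (augmented) simplicial objects, hence on geometric realizations.

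I expect the main (only) obstacle to be a careful bookkeeping of the simplicial identities: checking that the \v Cech nerve of the diagonal $G$-action on $X\times Y$ is genuinely equivalent, as a simplicial object of $\Cat$, to the bar complex with $\cD(X)$ and $\cD(Y)$ separated by the convolution factors of $\cD(G)$. Once this is confirmed, Proposition~\ref{prop:tensorquotient} immediately supplies the target identification $\cD((X\times Y)/G)$ and the corollary follows.
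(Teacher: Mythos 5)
Your proposal is correct and follows the route the paper intends: the corollary is deduced from Proposition~\ref{prop:tensorquotient} by identifying both sides as geometric realizations of bar complexes whose $n$-simplices are matched via the K\"unneth equivalence $\cD(X)\otimes\cD(G)^{\otimes n}\otimes\cD(Y)\simeq\cD(X\times G^{n}\times Y)$, exactly as the paper does for $\cD(X\times G^{\times n})$ in the discussion preceding the proposition. The remaining bookkeeping you flag (matching the two-sided bar construction of the $G$-actions on $X$ and $Y$ with the \v{C}ech nerve of the diagonal action on $X\times Y$) is the standard simplicial identification and is the only content the paper leaves implicit.
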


\begin{lemma}\label{lemma:dual}
Let $G$ and $H$ be reductive groups and $X$ a perfect stack with an action of $G\times H$. Then $\cD(X)$ is $\cD(G)$ and $\cD(H)$ dualizable and self dual as a $\cD(H)-\cD(G)$-bimodule. Moreover, after identifying $\cD(X) \otimes_{\cD(H)} \cD(X)$ with $\cD(H \bs (X\times X))$ and $\cD(X) \otimes_{\cD(G)} \cD(X)$ with $\cD((X\times X)/G)$ using Corollary \ref{cor:hh}, the duality data
\[
\xymatrix{
\cD(G)   \ar@/^/[rr]^{u^R} & &\ar@/^/[ll]^{c^L} \cD(X) \otimes_{\cD(H)} \cD(X)\\
\cD(H)   \ar@/^/[rr]^{u^L} & &\ar@/^/[ll]^{c^R} \cD(X) \otimes_{\cD(G)} \cD(X)\\
}
\]
are given by:
\begin{align*}
u^L &= \delta_\ast \gamma^! \\
c^L &= \alpha_\ast \beta^! \\
u^R &= \beta_\ast \alpha^! \\
c^R &= \gamma_\ast \delta^!,
\end{align*}
where $\alpha, \beta, \gamma, \delta$ are the canonical maps in the following correspondences:
\begin{equation}\label{diagram:horocycle}
\xymatrix{
G & \ar[l]_\alpha  (H \bs X) \times G  \ar[r]^\beta & H\bs (X\times X) \\
H & \ar[l]_\gamma H \times (X/G) \ar[r]^\delta & (X \times X)/G.
}
\end{equation}
\end{lemma}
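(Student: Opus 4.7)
The plan is to construct the duality data geometrically using the correspondences in Diagram~\ref{diagram:horocycle}, and verify the triangle identities by base change after identifying the relative tensor products via Corollary~\ref{cor:hh}.

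First, I would use Corollary~\ref{cor:hh} to identify
\[
\cD(X)\otimes_{\cD(H)}\cD(X)\simeq\cD(H\bs(X\times X)),\qquad \cD(X)\otimes_{\cD(G)}\cD(X)\simeq\cD((X\times X)/G),
\]
where $H$ (respectively $G$) acts diagonally. Under these identifications the proposed functors $u^L=\delta_\ast\gamma^!$, $c^L=\alpha_\ast\beta^!$, $u^R=\beta_\ast\alpha^!$, $c^R=\gamma_\ast\delta^!$ are morphisms in $\Cat$ (since $!$-pullback and $\ast$-pushforward are continuous for the maps in Diagram~\ref{diagram:horocycle}), and one checks that they respect the residual bimodule structures inherited from the $G\times H$-action on $X$.

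Second, the main step is to verify the left-duality triangle identity for the composite
\[
\cD(X)\xrightarrow{u^L\otimes 1_M}\cD(X)\otimes_{\cD(G)}\cD(X)\otimes_{\cD(H)}\cD(X)\xrightarrow{1_M\otimes c^L}\cD(X).
\]
A further application of Corollary~\ref{cor:hh} identifies the middle term with $\cD(H\bs X\times X\times X/G)$ (with $G$ acting diagonally on the first two factors and $H$ diagonally on the last two). Under this identification, the composite becomes $\ast$-pushforward after $!$-pullback along the fibre product obtained by concatenating the spans of Diagram~\ref{diagram:horocycle}. The geometric computation to carry out is that this fibre product collapses, via the diagonals $X\to X\times X$ inside the $G$- and $H$-quotients, to $X$ with both legs the identity map; base change then yields the desired identity functor. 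The second left-triangle identity is strictly analogous.

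Third, for proper dualizability I would verify that $u^L$ and $c^L$ admit continuous right adjoints. By the standard adjunctions for $\cD$-modules on perfect stacks ($f^\ast\dashv f_\ast$ and $f_!\dashv f^!$, together with Grothendieck-style duality to relate these for the maps in Diagram~\ref{diagram:horocycle}), the right adjoints are given by $(u^L)^R=\gamma_\ast\delta^!=c^R$ and $(c^L)^R=\beta_\ast\alpha^!=u^R$. Lemma~\ref{lemmaleftright} then delivers the right duality with the same data as stated, showing that $\cD(X)$ is self-dual as a $\cD(H)$-$\cD(G)$-bimodule. The main obstacle is the geometric computation in the second step: carefully tracking the bimodule structures through the iterated tensor product and exhibiting the Cartesian diagram in which the concatenated correspondence collapses to the identity correspondence $X\leftarrow X\to X$. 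Once this diagram is in hand, the triangle identities reduce to standard base change for $\cD$-modules.
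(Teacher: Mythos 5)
Your core strategy is the same as the paper's: take the four functors defined by the correspondences in Diagram~\ref{diagram:horocycle} as an ansatz for the unit and counit, identify the relative tensor products via Corollary~\ref{cor:hh}, and verify the triangle identities by base change along the collapsing of the concatenated spans to the identity correspondence $X \leftarrow X \to X$. Your first two steps fill in exactly the details the paper leaves implicit, and they are fine.

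Your third step, however, contains a gap. The lemma is stated for an arbitrary perfect stack $X$ with a $G\times H$-action; it claims left \emph{and} right dualizability with the given data, but it does \emph{not} claim proper dualizability, and for good reason: for general $X$ the maps $\alpha,\delta$ need not be proper and $\beta,\gamma$ need not be smooth, so $u^L$ and $c^L$ need not admit continuous right adjoints, and the identities $(u^L)^R = c^R$, $(c^L)^R = u^R$ are not available. (Those adjunctions are established only later, in the proposition about $X = N\bs G$, where properness and smoothness of the relevant maps are checked explicitly.) Consequently you cannot invoke Lemma~\ref{lemmaleftright} to deduce the right duality. The fix is simply to treat $(u^R, c^R)$ the same way you treated $(u^L, c^L)$: they are continuous functors defined by the correspondences, and the triangle identities for the right duality are verified by the same base-change computation. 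This is what the paper's ansatz argument does — it checks both sets of triangle identities directly rather than deriving one duality from the other.
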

\begin{proof}
Suppose one takes (as an ansatz) the unit and counit maps to be as claimed above; it then follows from base change that the triangle identities are satisfied, and thus the ansatz unit and counit are indeed the unit and counit of an adjunction as required.
\end{proof}

\subsection{Categorical Highest Weight Theorem}
Fix a reductive group $G$, a Borel subgroup $B$, and take $N=[B,B]$, $H=B/N$, and $X=N\bs G$.
\begin{proposition}
The $\cD(H)-\cD(G)$-bimodule $\cD(N\bs G)$ is properly dualizable and self dual, and the action morphism $u^R$ is conservative.
\end{proposition}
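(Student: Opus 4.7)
The plan is to specialize Lemma \ref{lemma:dual} to $X = N\bs G$ equipped with its natural $(H\times G)$-action ($H = B/N$ on the left via lifts to $B$, and $G$ on the right). The lemma then yields self-duality of $\cD(N\bs G)$ as a $\cD(H)$--$\cD(G)$-bimodule along with explicit correspondence formulas for the unit, counit, and their right adjoints $u^R = \beta_*\alpha^!$ and $c^R = \gamma_*\delta^!$. What remains is proper dualizability (continuity of $\beta_*$ and $\gamma_*$) and conservativity of $u^R$.

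Identifying the stacks: $H\bs X = B\bs G$, while $X/G = BN$ because the right $G$-action on $N\bs G$ is transitive with stabilizer conjugate to $N$. Consequently $\gamma: H\times BN \to H$ and $\alpha: (B\bs G)\times G \to G$ are projections, and $\beta: (B\bs G)\times G \to H\bs(X\times X)$, $(Bg_1,g_2)\mapsto [Ng_1,Ng_1g_2]_H$, is the $H$-quotient of the action map $X\times G\to X\times X$, $(x,g)\mapsto (x,xg)$, a smooth surjection whose fibers are torsors for conjugates of the unipotent group $N$. For proper dualizability, a K\"unneth factorization gives $\gamma_* \simeq \id_{\cD(H)}\otimes p_*$ where $p: BN \to pt$, and $p_*$ is continuous since $N$ is unipotent (derived $N$-invariants are computed by the finite Chevalley--Eilenberg complex, so $p_*$ preserves compact objects and hence all colimits). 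The same unipotent-cohomology principle, applied fiberwise, yields continuity of $\beta_*$.

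For conservativity of $u^R = \beta_*\alpha^!$: the map $\alpha$ is a smooth surjection, so $\alpha^! \simeq \alpha^*[2\dim(B\bs G)]$ is conservative by faithful flatness. To conclude that $u^R$ itself is conservative, I would invoke that $\beta_*$ does not destroy the information of $\alpha^!(\cF)$ --- equivalently, that the $\cD(G)$-action on $\cD(N\bs G)$ is faithful --- which is a consequence of the transitivity of the right $G$-action on $N\bs G$. The main obstacle I anticipate is making this faithfulness rigorous; this likely requires an explicit description of $u^R(\cF)$ via the action correspondence, together with a detection argument (for instance, pulling back along a section of $\beta$ over a suitable locus, or using the identification of $u^R(\cF)$ as the kernel representing right convolution by $\cF$ and verifying that a non-zero $\cF$ produces a non-zero kernel).
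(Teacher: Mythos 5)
Your treatment of the dualizability and proper dualizability is essentially the paper's: both specialize Lemma \ref{lemma:dual} to $X=N\bs G$ and then check continuity of the right adjoints of $u^L=\delta_*\gamma^!$ and $c^L=\alpha_*\beta^!$. The paper does this by observing that $\alpha$ and $\delta$ are proper (so $(\alpha_*)^R=\alpha^!$ and $(\delta_*)^R=\delta^!$ are continuous) and that $\beta$ and $\gamma$ are smooth; your unipotent-cohomology argument for the continuity of $\beta_*$ and $\gamma_*$ is a reasonable, more hands-on substitute for the latter, but note that you also need the former: continuity of $(\alpha_*)^R$ and $(\delta_*)^R$ is exactly where properness of the flag variety $B\bs G$ and of $\quot NBN\hookrightarrow\quot NGN$ enters, and your proposal is silent on this.

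The genuine gap is the conservativity of $u^R$, which you flag but do not close, and the route you sketch would not close it. ``Faithfulness of the $\cD(G)$-action as a consequence of transitivity of the right $G$-action on $N\bs G$'' is a false heuristic: for finite groups of Lie type, $G$ acts transitively on $G/N$, yet cuspidal representations act by zero on functions on $G/N$, so the exact analogue of the claimed faithfulness fails. Conservativity of $u^R$ is precisely the statement that there are no cuspidal categorical representations, i.e.\ it is the substantive content of the proposition, and it cannot follow from transitivity or from a generic ``detection along a section of $\beta$'' argument (indeed $\beta_*$ is a pushforward along $N$-fibrations and such pushforwards can kill nonzero objects). The paper's proof supplies the missing input via Springer theory: following Mirkovi\'c--Vilonen, the composite $c^Lu^R$ is convolution with the Springer sheaf $\cS_G=(\widetilde\cN\to\cN)_*\omega_{\widetilde\cN}$, and by the decomposition theorem (with its Weyl group action) $\cS_G$ contains the unit $\delta_e$ as a direct summand; hence $c^Lu^R$ contains $\mathrm{id}$ as a summand and $u^R$ is conservative. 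Some such nontrivial geometric input is unavoidable here, and your proposal does not provide it.
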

\begin{proof}
The dualizability follows from Lemma \ref{lemma:dual}.  We have $u^L = \delta_\ast \gamma^!$, and $c^L = \alpha_\ast \beta^!$ (where $\alpha, \beta,\gamma, \delta$ are as in Diagram \ref{diagram:horocycle}). In this case, $\alpha$ and $\delta$ are proper, and $\beta$ and $\gamma$ are smooth, thus $\alpha_\ast$, $\delta_\ast$, $\beta^!$, and $\gamma^!$ all have continuous right adjoints as required.

It remains to show that $u^R$ is conservative. As observed by Mirkovic-Vilonen \cite{MVcharacter}, the composite $c^L u^R$ is the endofunctor on $\cD(G)$ given by convolution with the Springer sheaf $\cS_G := (\widetilde{\cN} \to \cN)_\ast \omega_{\widetilde{\cN}}$. As the Springer sheaf contains the unit object for convolution $\delta_e \in \cD(G)$ as a direct summand, we have that $c^L u^R$ contains the indentity functor as a summand. In particular, $u^R$ is conservative.
\end{proof}

\begin{corollary}
The bimodule $\cD(N\bs G)$ defines a Morita equivalence between $\cD(G)$ and $\cH= \cD(\quot NGN)$.
\end{corollary}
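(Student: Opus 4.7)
The plan is to apply Theorem~\ref{thm:morita} with $A := \cD(G)$, $B := \cD(H)$, and $M := \cD(N\bs G)$. The preceding proposition verifies precisely the two hypotheses of that theorem: proper dualizability of the bimodule $M$ (coming from Lemma~\ref{lemma:dual} together with the properness and smoothness of the relevant maps in the horocycle correspondence), and conservativity of the action morphism $u^R : \cD(G) \to \End_{\cD(H)}(M)$ (via the Mirkovic--Vilonen identification of $c^L u^R$ with convolution by the Springer sheaf, which contains $\delta_e$ as a direct summand). The abstract Morita theorem then immediately yields an equivalence of module categories $\cH(M)-\module \simeq \cD(G)-\module$ implemented by the functor $M \otimes_{\cD(G)} (-)$.

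What remains is to identify the Hecke category $\cH(M) = \End_{\cD(G)}(M)$ with $\cD(\quot NGN)$. I would proceed in three steps. First, by $\cD(G)$-dualizability of $M$, write $\cH(M) \simeq M \otimes_{\cD(G)} \ls\vee M$. Second, invoke the self-duality asserted in Lemma~\ref{lemma:dual} (using the isomorphism $N\bs G \simeq G/N$ via inversion) to identify $\ls\vee M$ with $\cD(G/N)$, viewed as a $\cD(G)$-$\cD(H)$-bimodule via the left $G$-action and right $H$-action on $G/N$. Third, apply Corollary~\ref{cor:hh} to the diagonal $G$-action on $N\bs G \times G/N$ to conclude
$$
\cH(M) \simeq \cD(N\bs G) \otimes_{\cD(G)} \cD(G/N) \simeq \cD\bigl((N\bs G \times G/N)/G\bigr) \simeq \cD(\quot NGN),
$$
where the last identification uses $(N\bs G) \times^G (G/N) \simeq N \bs G / N$.

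There is essentially no obstacle at this final step, by design: the corollary packages the abstract Morita theorem of Section~\ref{sec:proof} with the geometric input from the preceding proposition. All of the substantive work --- setting up the descent machinery, producing proper dualizability via the horocycle correspondence, and verifying conservativity through the Springer sheaf argument --- has already been completed earlier in the paper, so the corollary amounts to a clean reassembly of those ingredients.
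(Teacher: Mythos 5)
Your proposal is correct and follows exactly the route the paper intends: the corollary is stated without proof precisely because it is the immediate application of Theorem~\ref{thm:morita} to the preceding proposition, and your identification of $\cH(M)=\End_{\cD(G)}(\cD(N\bs G))$ with $\cD(\quot NGN)$ via self-duality and Corollary~\ref{cor:hh} is the intended (and correct) final step.
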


\begin{theorem}
The bimodule $\cD(\quot NGN)_H$ defines a Morita equivalence between $\cD(\quot NGN)$ and $\cD_H(\quot NGN)_H$.
\end{theorem}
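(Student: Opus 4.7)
The plan is to apply Theorem \ref{thm:morita} in direct parallel to the preceding Corollary, now with $A = \cD(\quot NGN)$ and $M = \cD(\quot NGN)_H$ viewed as a right $A$-module with left action by $B = \cD_H(\quot NGN)_H$. The Hecke algebra $\cH(M) = \End_A(M)$ should be identified with $\cD_H(\quot NGN)_H$: the self-endomorphisms of the one-sided monodromic bimodule as a right $\cD(\quot NGN)$-module correspond to two-sided $H$-monodromic convolution kernels on $\quot NGN$.

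For the proper dualizability hypothesis, I would transport the question along the Morita equivalence $\cD(G)\simeq\cD(\quot NGN)$ just established. Under this equivalence, the right module $\cD(\quot NGN)_H$ corresponds to $\cD(G/N)_H = \widetilde{\cD}(G/B)$, the universal monodromic $\cD$-modules on the flag variety. The duality data then comes from a monodromic analogue of Lemma \ref{lemma:dual}: the horocycle correspondences of Diagram \ref{diagram:horocycle} are geometrically unchanged (they merely acquire weak $H$-equivariance on the appropriate side), so the properness of $\alpha,\delta$ and smoothness of $\beta,\gamma$ continue to guarantee that the unit and counit morphisms admit continuous right adjoints.

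For conservativity of the action map $u^R$, the composite $c^L u^R$ is convolution with a monodromic Springer object, i.e., the $H$-monodromic enhancement of the Springer sheaf $\cS_G$ from the preceding Proposition. Either by running the Mirkovic--Vilonen argument directly in the monodromic setting, or by specializing the monodromy parameter and using that $\cS_G$ contains $\delta_e$ as a summand, one concludes that this monodromic Springer object contains the unit of convolution as a direct summand. Hence $c^L u^R$ contains the identity functor as a summand and $u^R$ is conservative, so Theorem \ref{thm:morita} delivers the required Morita equivalence.

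The main technical obstacle is the monodromic bookkeeping: precisely identifying $\End_{\cD(\quot NGN)}(\cD(\quot NGN)_H)$, lifting Diagram \ref{diagram:horocycle} to the weakly equivariant setting, and verifying the direct-summand statement for the monodromic Springer sheaf. Once these pieces are in place, the argument is formally identical to the preceding Corollary.
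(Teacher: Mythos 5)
Your strategy diverges substantially from the paper's, and as written it has two genuine gaps. The paper's proof is a one-line appeal to 1-affineness: the functor $M\mapsto M\otimes_{\cD(\quot NGN)}\cD(\quot NGN)_H$ is identified with the functor of weak $H$-invariants, and Gaitsgory's theorem (that $QC(H)$ and $\Rep(H)$ are Morita equivalent via the trivial module $\Vect$) immediately yields the claimed Morita equivalence. No dualizability data, horocycle correspondences, or Springer sheaves are needed for this step.

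The first gap in your version is the choice of auxiliary algebra. Theorem \ref{thm:morita} requires a $B$-$A$-bimodule with $B$ playing the role that $\cD(H)$ played in the preceding Corollary; taking $B=\cH(M)=\cD_H(\quot NGN)_H$ collapses the argument, since the hypothesis that $A\to\End_B(M)$ be conservative with $B=\cH(M)$ is essentially the conclusion you are trying to prove, and the bar resolution over $B=\cH$ degenerates. You would need to take $B=\Rep(H)$ (the weak analogue of $\cD(H)$), then actually identify $\End_A(M)$ with $\cD_H(\quot NGN)_H$ and redo the horocycle computation in the weakly equivariant setting; none of this is carried out. The second, more serious gap is the conservativity argument. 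In the monodromic/weak setting the composite $c^Lu^R$ is convolution not with the Springer sheaf $(\widetilde{\cN}\to\cN)_*\omega_{\widetilde{\cN}}$ but with (a version of) the Grothendieck--Springer sheaf, the pushforward of the dualizing sheaf along the full Grothendieck alteration. That map is small, so by the decomposition theorem its pushforward is a direct sum of IC extensions of $W$-isotypic local systems on the regular semisimple locus, all with full support; it does \emph{not} contain the unit $\delta_e$ as a direct summand, and ``specializing the monodromy parameter'' cannot produce one. So the direct-summand argument that worked for $\cD(N\bs G)$ breaks down here, and conservativity would have to be established by a different mechanism --- which the 1-affineness argument sidesteps entirely.
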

\begin{proof}
The functor 
\[
M \mapsto M\otimes_{\cD(\quot NGN)} \cD(\quot NGN)_H.
\]
the functor of weak $H$-invariants. Thus the result follows from the 1-affineness theorem of Gaitsgory \cite{1affine}.
\end{proof}

\end{document}